\def\>{\relax\ifmmode\mskip.666667\thinmuskip\relax\else\kern.111111em\fi}
\def\<{\relax\ifmmode\mskip-.333333\thinmuskip\relax\else\kern-.0555556em\fi}
\def\vsk#1>{\vskip#1\baselineskip}
\def\vv#1>{\vadjust{\vsk#1>}\ignorespaces}
\def\vvn#1>{\vadjust{\nobreak\vsk#1>\nobreak}\ignorespaces}
\let\alb\allowbreak
\let\Maketitle\maketitle
\def\maketitle{\Maketitle\thispagestyle{empty}\let\maketitle\empty}
\newtheorem{theorem}{Theorem}[section]
\newtheorem{cor}[theorem]{Corollary}
\newtheorem{lem}[theorem]{Lemma}
\newtheorem{prop}[theorem]{Proposition}
\theoremstyle{definition}
\theoremstyle{remark}
\newtheorem*{rem}{Remark}
\newtheorem{example}[theorem]{Example}
\numberwithin{equation}{section}
\def\dfrac#1#2{{\displaystyle\frac{#1}{#2}}}
\let\leq\leqslant
\let\geq\geqslant
\def\XXX/{{\sl XXX}}
\let\nc\newcommand
\let\rnc\renewcommand
\nc{\on}{\operatorname}
\nc{\ch}{\mbox{ch}}
\nc{\Z}{{\mathbb Z}}
\nc{\C}{{\mathbb C}}
\nc{\R}{{\mathbb R}}
\nc{\pone}{{\mathbb C}{\mathbb P}^1}
\nc{\pa}{\partial}
\nc{\F}{{\mathcal F}}
\nc{\arr}{\rightarrow}
\nc{\larr}{\longrightarrow}
\nc{\al}{\alpha}
\nc{\ri}{\rangle}
\nc{\lef}{\langle}
\nc{\W}{{\mathcal W}}
\nc{\la}{\lambda}
\nc{\ep}{\epsilon}
\nc{\eps}{\varepsilon}
\nc{\Om}{\Omega}
\nc{\su}{\widehat{{\mathfrak sl}}_2}
\nc{\sw}{{\mathfrak s}{\mathfrak l}}
\nc{\g}{{\mathfrak g}}
\nc{\h}{{\mathfrak h}}
\nc{\n}{{\mathfrak n}}
\nc{\N}{\widehat{\n}}
\nc{\G}{\widehat{\g}}
\nc{\De}{\Delta_+}
\nc{\gt}{\widetilde{\g}}
\nc{\Ga}{\Gamma}
\nc{\one}{{\mathbf 1}}
\nc{\z}{{\mathfrak Z}}
\nc{\zz}{{\mathcal Z}}
\nc{\Hh}{{\mathcal H}_\beta}
\nc{\qp}{q^{\frac{k}{2}}}
\nc{\qm}{q^{-\frac{k}{2}}}
\nc{\La}{\Lambda}
\nc{\wt}{\widetilde}
\nc{\qn}{\frac{[m]_q^2}{[2m]_q}}
\nc{\cri}{_{\on{cr}}}
\nc{\kk}{h^\vee}
\nc{\sun}{\widehat{\sw}_N}
\nc{\hh}{\widehat{\mathfrak h}}
\nc{\HH}{{\mathcal H}_{q,t}}
\nc{\ca}{\wt{{\mathcal A}}_{h,k}(\sw_2)}
\nc{\gl}{\widehat{{\mathfrak g}{\mathfrak l}}_2}
\nc{\el}{\ell}
\nc{\s}{{\mathbf s}}
\nc{\bi}{\bibitem}
\nc{\om}{\omega}
\nc{\WW}{\W_\beta}
\nc{\scr}{{\mathbf S}}
\nc{\ab}{{\mathbf a}}
\nc{\rr}{r}
\nc{\ol}{\overline}
\nc{\con}{qt^{-1} + q^{-1}t}
\nc{\den}{q^{\el-1} t^{-\el+1}+ q^{-\el+1} t^{\el-1}}
\nc{\ds}{\displaystyle}
\nc{\B}{B}
\nc{\A}{{\mathbb A}}
\nc{\GG}{{\mathcal G}}
\nc{\UU}{{\mathcal U}}
\nc{\MM}{{\mathcal M}}
\nc{\CC}{{\mathcal C}}
\nc{\GL}{{}^L G}
\nc{\dzz}{\frac{dz}{z}}
\nc{\Res}{\on{Res}}
\nc{\rep}{{\mathcal R}ep \;}
\nc{\uqg}{U_q \G}
\nc{\uqgg}{U_q \g}
\nc{\Fq}{{\mathbb F}_q}
\nc{\stimes}{\ltimes}
\nc{\K}{\hat{\mathcal K}}
\nc{\Ql}{\ol{\mathbb Q}_\ell}
\rnc{\O}{\hat{\mathcal O}}
\nc{\ga}{\gamma}
\nc{\PL}{{}^L P}
\nc{\E}{\mc E}
\nc{\mc}{\mathcal}
\nc{\mbf}{\mathbf}
\nc{\bb}{{\mathfrak b}}
\nc{\OO}{{\mc O}}
\nc{\Po}{{\mc P}}
\nc{\V}{{\mc V}}
\nc{\yy}{{\mc Y}}
\nc{\M}{\mathcal M}
\nc{\Coh}{{{\mathcal C}oh}}
\nc{\Cohn}{\Coh_n}
\nc{\f}{{\mathcal F}}
\nc{\si}{_E}
\nc{\Gaf}{{\mathbb G}_{a,\Fq}}
\nc{\KK}{{\mathfrak k}}
\nc{\PO}{{\mathbb P^1}}
\nc{\PR}{{\mathbb P^r}}
\nc{\Wr}{{ {\rm Wr}}}
\nc{\gln}{{\mathfrak{gl}_N}}
\def\beq{\begin{equation}}
\def\eeq{\end{equation}}
\def\be{\begin{equation*}}
\def\ee{\end{equation*}}
\nc{\bean}{\begin{eqnarray}}
\nc{\eean}{\end{eqnarray}}
\nc{\bea}{\begin{eqnarray*}}
\nc{\eea}{\end{eqnarray*}}
\nc{\bs}{\boldsymbol}
\nc{\Ref}[1]{{$($\ref{#1}$)$}}
\nc{\U}{\mathcal{U}}
\nc{\sing}{{\rm Sing}\,}
\nc{\Ll}{\La - \al(\bs l)}
\nc{\nL}{L_{\om_r}^{\otimes n}[\mu]}
\nc{\nnL}{L_{\om_r}^{\otimes n}}
\nc{\snL}{ \sing L_{\om_r}^{\otimes n}[\mu]}
\nc{\btz}{ \om(\bs t; \bs z)}
\nc{\SL}{ \mathfrak{sl}}
\nc{\GR}{ {G(r+1,d)}}
\nc{\diag}{{\on{diag}}}
\begin{document}

\hrule width 0pt

\title[On reality property of Wronski maps]
{On reality property of Wronski maps}

\author[E.\,Mukhin, V.\,Tarasov, and A.\,Varchenko]
{E.\,Mukhin$\>^*$, V.\,Tarasov$\>^{\star,*}$, and A.\,Varchenko$\>^\diamond$}

\maketitle

\begin{center}
{\it $^\star\<$Department of Mathematical Sciences\\
Indiana University\,--\>Purdue University Indianapolis\\
402 North Blackford St, Indianapolis, IN 46202-3216, USA\/}

\medskip
{\it $^*\<$St.\,Petersburg Branch of Steklov Mathematical Institute\\
Fontanka 27, St.\,Petersburg, 191023, Russia\/}

\medskip
{\it $^\diamond\<$Department of Mathematics, University of North Carolina
at Chapel Hill\\ Chapel Hill, NC 27599-3250, USA\/}
\end{center}

\medskip
\begin{abstract}
We prove that if all roots of the discrete Wronskian with step 1 of
a set of quasi-exponentials with real bases are real, simple and
differ by at least $1$, then the complex span of this set of
quasi-exponentials has a basis consisting of quasi-exponentials with
real coefficients. This result generalizes the B.~and M.\,Shapiro
conjecture about spaces of polynomials.

The proof is based on the Bethe ansatz method for the \XXX/ model.
\end{abstract}

\maketitle

\section{Introduction} The B.~and M.\,Shapiro conjecture asserts that
if the Wronskian of $N$ polynomials with complex coefficients has real roots
only, then the space spanned by these polynomials has a basis consisting of
polynomials with real coefficients. This conjecture has many algebro-geometric
reformulations and has generated a lot of interest in the past decade,
see for example \cite{RSSS}, \cite{S}.

The B.~and M.\,Shapiro conjecture in the case of two polynomials was proved in
\cite{EG} by complex-analytic methods. In \cite{MTV1} we proved the general
case using a different approach. We showed that a generic space of polynomials
$V$ can be constructed by the Bethe ansatz method for the periodic Gaudin
model. It turns out that the coefficients of the monic differential operator
$D$ of order $N$ annihilating $V$ are eigenvalues of the transfer matrices ---
linear operators, acting on the space of states of the Gaudin model.
If the roots of the Wronskian of $V$ are real, then the transfer matrices are
self-adjoint with respect to a positive definite Hermitian form, hence, their
eigenvalues are real. This implies that the coefficients of the differential
operator $D$ are real, which gives the existence of a basis for $V$ consisting
of polynomials with real coefficients.

In this paper we prove a similar statement about spaces of
quasi-exponentials by the same method. Namely, we prove that if the
Wronskian of $N$ quasi-exponentials $e^{\la_ix}p_i(x)$,
where $\la_i$ are real numbers and $p_i(x)$ are polynomials withs
complex coefficients, has real roots only, then the space spanned by
these quasi-exponentials has a basis such that all polynomials have real
coefficients, see Theorem~\ref{theorem 2}. The proof is based on the
Bethe ansatz for the quasi-periodic Gaudin model. The case
$\la_1=\dots=\la_N=0$ is the original B.~and M.\,Shapiro conjecture.

Using the Bethe ansatz for the quasi-periodic \XXX/ model, we obtain
a similar statement about spaces of quasi-exponentials with the Wronskian
replaced by the discrete Wronskian, see~Theorem~\ref{theorem 1}.
In this case, a new phenomenon occurs: the statement is true only if some
additional restrictions are imposed on the roots of the Wronskian. For example,
it is sufficient to require that the roots of the Wronskian differ by at least
one. The first item of Theorem~\ref{theorem 1} for $N=2$ and $\la_1=\la_2=0$
follows from Theorem~1 in \cite{EGSV}.

We also consider spaces of quasi-polynomials of the form
$x^{z_i}p_i(x,\log x)$, where $z_i$ are real numbers and $p_i(x,y)$
are polynomials with complex coefficients, and their Wronskians.
Theorem~\ref{theorem 3} describes sufficient conditions for such a space
to have a basis such that all polynomials have real coefficients.
Theorem~\ref{theorem 3} is a statement bispectral dual to
Theorem~\ref{theorem 1} in the sense of \cite{MTV5}, \cite{BC}.

Theorems~\ref{theorem 1}, \ref{theorem 2} and \ref{theorem 3}
have reformulations in
terms of explicit matrices depending on two groups of complex parameters,
see Theorems~\ref{theorem 1a}, \ref{theorem 2a}, \ref{theorem 3a}.
For example, if a matrix
\be
\left( \begin{array} {cccccc}
a_1 & \dfrac{1}{\la_2-\la_1} & \dfrac{1}{\la_3-\la_1} &\dots &
\dfrac{1}{\la_N-\la_1}
\\
\dfrac{1}{\la_1-\la_2} & a_2 & \dfrac{1}{\la_3-\la_2} &{} \dots &
\dfrac{1}{\la_N-\la_2}
\\[9pt]
{}\dots & {}\dots & {} \dots & \dots & \dots
\\[6pt]
\dfrac{1}{\la_1-\la_N} & \dfrac{1}{\la_2-\la_N}& \dfrac{1}{\la_3-\la_N}
&{} \dots & a_N
\end{array} \right)
\ee has real eigenvalues and the numbers $\lambda_1,\dots,\lambda_N$ are real,
then the numbers $a_1,\dots, a_N$ are real.
Those reformulations, see Corollaries~\ref{cor 1}, \ref{cor 2}, are
related to properties of Calogero-Moser spaces. They also imply a
criterion for the reality of irreducible representations of Cherednik
algebras, see \cite{HY}. The second proof of this criterion can be
found in \cite{GHY}.

The paper is organized as follows. We state the discrete version of
B.~and M.\,Shapiro conjecture in Section~\ref{state sec} and prove
this result in Section~\ref{proof sec}. In Section~\ref{smooth sec} we
deduce Theorem~\ref{theorem 2} for spaces of quasi-exponentials from
Theorem~\ref{theorem 1}. In Section~\ref{trig sec} we consider
spaces of quasi-polynomials, see Theorem ~\ref{theorem 3}. In
Section \ref{app} we reformulate our results in terms of matrices.

The results of the paper were presented at the workshop ``Contemporary
Schubert Calculus and Schubert Geometry'' at Banff, Canada, in March
of 2007 and the conference "ISLAND III: Algebraic aspects of
integrable systems" in Scotland in July of 2007.

{\bf Acknowledgments.}\enspace
E.M. is supported in part by NSF grant DMS-0601005.
V.T. is supported in part by RFFI grant 05-01-00922.
A.V. is supported in part by NSF grant DMS-0555327.

\section{Spaces of quasi-exponentials and
the discrete Wronski map}\label{state sec}
\subsection{Formulation of the statement}
A function of the form $p(x)Q^{x}$, where $Q$ is a nonzero complex
number with the argument fixed, and $p(x)\in\C[x]$, is called {\it a
quasi-exponential function with base $Q$}.

Fix a natural number $N\geq 2$. Let $\bs Q=(Q_1,\dots,Q_{N})$ be a
sequence of nonzero complex numbers with their arguments fixed. We
always assume that if $Q_i=Q_j$ for some $i,j$ then the chosen
arguments of $Q_i$ and $Q_j$ are the same.

We call a complex vector
space of dimension $N$ spanned by quasi-exponential functions $p_i(x)Q_i^x$,
$i=1,\dots,N$, a {\it space of quasi-exponentials with bases $\bs Q$}.

A quasi-exponential function $p(x)Q^{x}$ is called {\it real}
if $Q\in\R^\times$ and $p(x)\in\R[x]$. The space of quasi-exponentials is
called {\it real\/} if it has a basis consisting of real quasi-exponential
functions.

{\it The discrete Wronskian}
of functions $f_1(x),\dots,f_N(x)$ is the determinant
\beq
\label{dwr}
\Wr^d(f_1,\dots, f_N)\ =\ \det
\left( \begin{array} {cccccc}
f_1(x) & f_1(x+1) & \dots & f_1(x+N-1)
\\
f_2 (x) & f_2(x+1) &{} \dots & f_2(x+N-1)
\\
{}\dots & {} \dots & \dots & \dots
\\
f_N(x) & f_N(x+1) &{} \dots & f_N(x+N-1)
\end{array} \right). \notag
\eeq
The discrete Wronskians of two bases for a vector space of functions
differ by multiplication by a nonzero number.

Let $V$ be a space of quasi-exponentials with bases $\bs Q$.
The discrete Wronskian of any basis for $V$ is a quasi-exponential of the form
$w(x)\prod_{j=1}^N Q_j^x$, where $w(x)\in\C[x]$.
The unique representative with a monic polynomial
$w(x)$ is called {\it the discrete Wronskian of\/} $V$ and is denoted by
$\Wr^d(V)$.

\begin{theorem}\label{theorem 1} Let $V$ be a space of quasi-exponentials
with real bases $\bs Q\in(\R^\times)^N$, and let
$\Wr^d(V)=\prod_{i=1}^n(x-z_i)\prod_{j=1}^NQ_j^x$.
Assume that $z_1,\dots,z_n$ are real. We have:
\begin{enumerate}
\item If $|z_i-z_j|\geq 1$ for all $i\neq j$, then the space $V$ is real.
\item Let $Q_1,\dots,Q_N$ be either all positive or all negative.
Assume that there exists a subset $I\subset\{1,\dots,n\}$ such that
$|z_i-z_j|\geq 1$ for $i\neq j$ provided either $i,j\in I$ or $i,j\not\in I$.
Then the space $V$ is real.
\end{enumerate}
\end{theorem}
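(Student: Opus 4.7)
The plan is to adapt the Bethe-ansatz argument of \cite{MTV1} by replacing the periodic Gaudin model with the quasi-periodic \XXX/ model. A space of quasi-exponentials $V$ with bases $\bs Q=(Q_1,\dots,Q_N)$ is annihilated by a unique monic linear difference operator of order $N$,
\be
D\ =\ \tau^N+A_{N-1}(x)\tau^{N-1}+\dots+A_0(x),
\ee
where $\tau$ is the shift $f(x)\mapsto f(x+1)$ and each $A_j(x)$ is rational in $x$; the space $V$ is real if and only if every $A_j(x)$ has real coefficients. The task is therefore to prove realness of the $A_j(x)$ from the hypotheses on $\bs Q$ and on the zeros $z_i$ of $\Wr^d(V)$.

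The second step realizes these coefficients through the Bethe ansatz. Consider the inhomogeneous quasi-periodic \XXX/ model associated to $\mathfrak{gl}_N$ with inhomogeneities $z_1,\dots,z_n$ and diagonal twist $\diag(Q_1,\dots,Q_N)$, acting on a suitable weight subspace of a tensor product of evaluation modules. For generic $V$, one shows that there is a Bethe eigenvector $\om(\bs t;\bs z)$ of the commuting family of quantum transfer matrices $T_j(x)$ whose eigenvalues on $\om(\bs t;\bs z)$ are precisely the coefficients $A_j(x)$. Proving realness of the $A_j(x)$ is thereby reduced to proving realness of the spectrum of a commuting family of finite-dimensional operators, and the non-generic case follows by a limiting argument using properness of the discrete Wronski map on the appropriate stratum.

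The most delicate step is then a positivity argument. On the relevant weight subspace one constructs a Shapovalov-type Hermitian form with respect to which all $T_j(x)$ are self-adjoint; the reality of the $Q_j$'s is what makes this self-adjointness possible. The separation condition $|z_i-z_j|\ge 1$ is exactly what ensures that this form is positive definite, since the entries of the Shapovalov matrix involve factors of the type $z_i-z_j+k$ for small integers $k$ and the gap condition prevents degeneracy. Self-adjointness on a positive definite space then yields real eigenvalues, hence real $A_j(x)$, proving item~(1). Item~(2) follows by the same scheme with the form twisted by signs attached to the partition $I\sqcup(\{1,\dots,n\}\setminus I)$: since $Q_1,\dots,Q_N$ share a common sign, the transfer matrices remain self-adjoint, while only the intra-block gap condition is needed to keep the twisted form definite. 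The main obstacle, and the genuinely new ingredient beyond \cite{MTV1}, is this positivity step: in the Gaudin setting positivity is automatic for any real configuration, whereas the \XXX/ Shapovalov form degenerates as soon as two $z_i$'s come to integer distance, so the quantitative gap $|z_i-z_j|\ge 1$ is essential and identifying and exploiting the right threshold is the heart of the proof.
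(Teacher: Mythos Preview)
Your overall strategy matches the paper's for part~(1): both use the quasi-periodic \XXX/ Bethe ansatz on $\bs W(\bs z)=W(z_1)\otimes\cdots\otimes W(z_n)$, realize the coefficients of the annihilating difference operator as eigenvalues of the transfer matrices $B_{i,\bs Q}(x)$, and deduce reality from symmetry with respect to a positive definite form. One terminological point: the paper's form is not Hermitian but the real symmetric bilinear \emph{Yangian form} $\langle v,w\rangle_{\bs R}=\langle v,\bs R w\rangle$ on $\bs W^\R(\bs z)$, with $\bs R$ the ordered product of $R$-matrices $R^{(i,j)}(z_i-z_j)$; positivity under $|z_i-z_j|>1$ is obtained by a continuity argument (the form limits to the standard tensor form as $z_1\gg\cdots\gg z_n$ and stays nondegenerate throughout), close in spirit to your sketch. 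The paper also contains a separate reduction to generic $\bs Q$ that your outline omits but which is routine.

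Your treatment of part~(2), however, misidentifies the key mechanism and has a genuine gap. The paper does \emph{not} twist by ``signs attached to the partition $I\sqcup I^c$''; a diagonal sign operator on tensor factors neither commutes with the Bethe algebra (so symmetry of the transfer matrices would be lost) nor removes the cross-block $R$-factors from $\bs R$. What the paper does, after ordering so that $I=\{1,\dots,s\}$, is twist by $G_s=(K_1K_2\cdots K_s)^{-1}$, where the $K_i$ are the qKZ Hamiltonians~\Ref{qkz op}, which are shown to lie in the image of the Bethe algebra. Because $G_s\in\mc B_{\bs Q}$, the algebra remains symmetric for $\langle v,w\rangle_{\bs R G_s}$. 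A Yang--Baxter computation then collapses $\bs R G_s$ to a product of $R$-matrices within $\{1,\dots,s\}$, times $R$-matrices within $\{s{+}1,\dots,n\}$, times $(Q^{(1)}\cdots Q^{(s)})^{-1}$. Only intra-block differences $z_i-z_j$ now occur, which is why the weaker gap hypothesis suffices; and the common sign of the $Q_i$ is used to make the residual diagonal factor $(Q^{(1)}\cdots Q^{(s)})^{-1}$ positive definite --- not, as you wrote, to preserve self-adjointness, which holds for any real $\bs Q$.
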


Theorem \ref{theorem 1} is proved in Section \ref{proof sec}.

\smallskip
Part (1) of Theorem~\ref{theorem 1} for $N=2$ and $\la_1=\la_2=0$ follows
from Theorem 1 in \cite{EGSV}.

\subsection{Examples}
For $\bs Q\in (\R^\times)^N$ let $\mc L_n(\bs Q)$ be the set
of points $\bs z=(z_1,\dots,z_n)$ of $\R^n$
such that all spaces
of quasi-exponentials with bases $\bs Q$ and the discrete Wronskian
$\Wr^d(V)=\prod_{i=1}^n(x-z_i)\prod_{j=1}^NQ_j^x$ are real.

The inequalities on $z_1,\dots,z_n$ described in Theorem~\ref{theorem
1} give an $n$-dimensional subset of $\mc L_n(\bs Q)$ which does not
depend on $\bs Q$. In examples, these inequalities are sharp, in the
sense that the corresponding hyperplanes are tangent to the boundary
of the set $\bigcup_{\bs Q}\mc L_n(\bs Q)$.

A larger subset of $\mc L_n(\bs Q)$ which depends on $\bs Q$ is described in
Proposition \ref{general cond}. In examples, this subset coincides
with $\mc L_n(\bs Q)$, however, its description is rather ineffective.

\begin{example}
Consider the case $N=2$, $\bs Q=(1,Q)$, $\deg p_1=1$, $\deg p_2=1$.
Then the discrete Wronskian has two zeroes, which we assume to be at $0$
and $A$. This case corresponds to the equation on $a,b$,
\be
\Wr^d(x+a,Q^x(x+b))=Q^x(Q-1)x(x-A)\,,
\ee
which has two solutions:
\begin{align*}
a\, &{}=\,\frac{-QA+A-2Q\pm\sqrt{(Q-1)^2A^2+4Q}}{2(Q-1)}\ , \\
b\, &{}=\,-1-A+\frac{QA-A+2Q\mp\sqrt{(Q-1)^2A^2+4Q}}{2(Q-1)}\ .
\end{align*}
The solutions are real for real $Q,A$ if and only if $A^2\geq -4Q/(Q-1)^2$.
Theorem~\ref{theorem 1} claims that the solutions are real if $A^2\geq 1$,
which gives a sufficient condition because $1\geq-4Q/(Q-1)^2$ for real $Q$.
The condition is sharp because $1=-4Q/(Q-1)^2$ for $Q=-1$.
\end{example}

\begin{example}
\label{example}
Consider the case $N=2$, $\bs Q=(1,1)$, $\deg p_1=1$, $\deg p_2=3$.
Then the discrete Wronskian has two zeroes, which we assume to be at $0,A$
and $B$. This case corresponds to the equation on $a,b,c$,
\be
\Wr^d(x+a,x^3+bx^2+c)=2x(x-A)(x-B)\,,
\ee
which has two solutions:
\begin{align*}
a\, &{}=\, -1/2-(A+B)/3\pm 1/3\sqrt{-AB-3/4+A^2+B^2},\\
b\, &{}=\, -3/2-A-B\pm \sqrt{-4AB-3+4A^2+4B^2}, \\
c\, &{}=\, 1/2+2(A+B)/3+1/3\sqrt{-AB-3+4A^2+4B^2}+AB\,.
\end{align*}
The solutions are real for real $A,B$ if and only if $A^2+B^2-AB-3/4\geq 0$.

The set $A^2+B^2-AB-3/4<0$ in the real plane with coordinates $A,B$
is the interior of an ellipse centered at the origin. This ellipse is inscribed
in the hexagon formed by the lines $|A|=1$, $|B|=1$ and $|A-B|=1$, and
is tangent to the sides of the hexagon at the points $(1,1/2)$, $(-1,-1/2)$,
$(1/2,-1/2)$, $(1/2,1)$, $(-1/2,-1)$, $(-1/2,1/2)$. Theorem~\ref{theorem 1}
claims that the numbers $a,b,c$ are real if the point $(A,B)$ is not inside
the hexagon.
\end{example}

\section{Proof of Theorem \ref{theorem 1}}
\label{proof sec}
\subsection{The discrete Wronskian is a finite algebraic map}
\label{d wr map sec}
Fix a natural number ${N\geq 2}$, natural numbers $n_1,\dots,n_k$ such
that $\sum_{i=1}^kn_i=N$, and a natural number $l$ such that $l>N$.
Fix $\bs Q=(Q_1,\dots,Q_1,\dots,Q_k,\dots,Q_k)\in\C^N$, where
$Q_i\neq 0$, $Q_i\neq Q_j$ if $i\neq j$ and $Q_i$ is repeated $n_i$
times.

Let $\C_l[x]\subset \C[x]$ be the space of all polynomials of degree
less than $l$. For $m\leq l$, let $ Gr(m,l)$ be the Grassmannian of all
$m$-dimensional subspaces in $\C_l[x]$. It is an irreducible projective
complex variety of dimension $m(l-m)$.

Let
\bean\label{n}
n=(l-1)N-\sum_{i=1}^k n_i(n_i-1)+1=lN-\sum_{i=1}^kn_i^2+1.
\eean

Define the {\it discrete Wronski map}:
\be
\Wr^d_{\bs Q}:\ Gr(n_1,l)\times Gr(n_2,l)
\times \dots \times Gr(n_k,l)\to Gr(1,n),
\ee
as follows. For $i=1,\dots, k$, let $V_i\in Gr(n_i,l)$ and let
$p_{i,1}(x),\dots,p_{i,n_i}(x)\in\C[x]$
be a basis for $V_i$. Then the map $\Wr^d_{\bs Q}$ sends the point
$V_1\times\dots\times V_k$
to the line spanned by the polynomial
\be
\Wr^d\Big(p_{1,1}(x)Q_1^x, \dots, p_{1,n_1}(x)
Q_1^x,\dots,p_{k,1}(x)Q_k^x,\dots,p_{k,n_k}(x)Q_k^x\Big)
\prod_{i=1}^kQ_i^{-n_ix}.
\ee

Let $V\in Gr(m,l)$. Then $V$ has a unique basis consisting
of monic polynomials $p_j(x)\in\C[x]$, $j=1,\dots, m$, of the form
\be
p_j(x)=x^{d_{j
}}+\sum_{s=1}^{d_{j}}a_{j,s} x^{d_{j}-s}
\ee
such that $d_r>d_s$ when $r>s$ and
$a_{j,s}=0$ whenever $d_{j}-s=d_{r}$ for some $r$.
We call this basis the {\it standard basis for $V$}.
\begin{prop}\label{d finite}
The discrete Wronski map is a finite algebraic map.
\end{prop}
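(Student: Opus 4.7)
The plan is to verify that $\Wr^d_{\bs Q}$ is (i) a well-defined algebraic morphism of projective varieties, (ii) between spaces of equal dimension, and (iii) proper with finite set-theoretic fibers, which together imply that it is a finite morphism. For (i), I expand the discrete Wronskian of a basis $p_{i,j}(x)\,Q_i^x$, $j=1,\dots,n_i$, $i=1,\dots,k$, by multilinearity of the determinant in its rows. Every term picks up the common factor $\prod_{i=1}^k Q_i^{n_i x}$, so the discrete Wronskian equals $w(x)\,\prod_{i=1}^k Q_i^{n_i x}$ for some polynomial $w(x)\in\C[x]$. A direct degree count using the standard bases yields $\deg w \le n-1$, with $n$ as in \eqref{n}, so the image lies in $Gr(1,n)$; the distinctness of the $Q_i$ ensures $w\not\equiv 0$; and algebraicity is evident from polynomial dependence on Pl\"ucker coordinates.

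Step (ii) is the direct identity
\[
\dim \prod_{i=1}^k Gr(n_i,l) \;=\; \sum_{i=1}^k n_i(l-n_i) \;=\; lN - \sum_{i=1}^k n_i^2 \;=\; n-1 \;=\; \dim Gr(1,n),
\]
matching the formula for $n$ given in \eqref{n}. For step (iii), the product $\prod_i Gr(n_i,l)$ is projective, so $\Wr^d_{\bs Q}$ is automatically proper. Since a proper quasi-finite morphism is finite, it suffices to verify that every fiber is finite as a set. By the dimension match in step (ii), the generic fiber is zero-dimensional. To handle all fibers uniformly, I would pass to an affine chart given by the standard basis coordinates $(a_{i,j,s})$ on each $V_i$, write the coefficients of $w(x)$ as explicit polynomials in these coordinates, and show that the polynomial system demanding a prescribed $w(x)$ cuts out a zero-dimensional subscheme. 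Equivalently, and more in the spirit of the rest of the paper, one identifies points of a fiber with joint eigenvectors of a commutative subalgebra (the discrete/XXX Bethe algebra) acting on a finite-dimensional module, which forces the fiber to be finite.

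The main obstacle is the last part of step (iii): ruling out contracted positive-dimensional subvarieties in non-generic fibers, since the dimension count alone only guarantees finiteness of the \emph{generic} fiber. I expect the cleanest resolution to come via the Bethe-algebra spectral description, because the same algebraic structure is deployed later in the proof of Theorem~\ref{theorem 1}; a self-contained argument directly on the standard basis coordinates should also work, but would require an explicit bound on the solution set of a somewhat involved polynomial system.
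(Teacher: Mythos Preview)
Your framework matches the paper's: properness is automatic from projectivity, so finiteness reduces to checking that every fiber is a finite set, and you correctly pass to standard-basis coordinates $a_{i,j,s}$ and the resulting polynomial system. The gap is that you do not actually prove this system has finitely many solutions. Your Bethe-algebra suggestion does not help: Lemma~\ref{cite2} gives the correspondence between Bethe eigenvectors and spaces of quasi-exponentials only for \emph{generic} $\bs z,\bs Q$, so it yields nothing beyond the generic-fiber finiteness already supplied by the dimension count; and Proposition~\ref{d finite} is itself invoked later (Corollary~\ref{weight zero} and the continuity step at the end of Section~\ref{end of proof}) precisely to pass from generic to special parameters, so appealing to the Bethe machinery here would be circular.

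The paper closes the gap by an elementary limit argument. Fix the degree sequence $(d_{i,j})$; there are finitely many such sequences, and on each the fiber equation is a square polynomial system in the $a_{i,j,s}$. If it had infinitely many solutions there would be a curve $a_{i,j,s}^t$ with some coordinate unbounded as $t\to\infty$. Within each $V_i^t$ one forms triangular combinations $\tilde p_{i,j}^t=p_{i,j}^t+\sum_{r<j}\alpha_{i,j,r}^t\,p_{i,r}^t$ chosen so that the dominant-in-$t$ parts of the $\tilde p_{i,j}^t$ are linearly independent; then some $\tilde p_{i,j}^t$ still has an unbounded coefficient, forcing the discrete Wronskian of the $\tilde p_{i,j}^t Q_i^x$ to be unbounded. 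But this Wronskian equals the original one and is independent of $t$, a contradiction. This is the concrete step your proposal defers as ``an explicit bound'' without supplying it.
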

\begin{proof}
The sets $Gr(n_1,l)\times Gr(n_2,l)\times \dots \times Gr(n_k,l)$
and $Gr(1,n)$ are projective algebraic varieties of dimension $n$.
The discrete Wronski map is a well-defined
algebraic map. We only need to show that every point of $Gr(1,n)$
has a finite number of preimages.

Fix a monic polynomial $w(x)\in\C_n[x]$.

Let $V_1\in Gr(n_1,l),\dots,V_k\in Gr(n_k,l)$ be such that
$\Wr^d_{\bs Q}(V_1\times\dots\times V_k)=\C w(x)$, and let
$p_{i,j}(x)=x^{d_{i,j}}+\sum_{s=1}^{d_{i,j}}a_{i,j,s} x^{d_{i,j}-s}$,
$i=1,\dots,k$, $j=1,\dots,n_i$, be the standard basis for $V_i$. Here
$d_{i,j}$ are non-negative integers such that $d_{i,j}<l$ and
$d_{i,r}>d_{i,s}$ if $r>s$.

We have
\bean\label{equation}
\Wr^d(p_{1,1}(x)Q_1^x,\dots, p_{k,n_k}(x)Q_k^x)=\hspace{125pt}\\ w(x)
\prod_{1\leq
i<j\leq k}(Q_j-Q_i)
\prod_{i=1}^k \big(Q_i^{n_ix}
\prod_{1\leq j<s\leq n_i}(d_{i,s}-d_{i,j})\big).\notag
\eean
Consider equation \Ref{equation}
as a system of algebraic equations on the nontrivial
coefficients
$a_{i,j,s}$ of polynomials $p_{i,j}(x)$. The number of equations equals
the number of variables. We claim that this system has finitely many solutions.

Assume that there exist infinitely many solutions.
Then there exists a curve of solutions $a_{i,j,s}^t$, $t\in \R_+$, such that
some of the coefficients $a_{i,j,s}^t$ tend to infinity in the limit
$t\to\infty$.

Consider the limit $t\to\infty$.
There exist $\al_{1,2,1}^t\in\C$ such that the
main terms of polynomials $p_{1,1}^t$ and
$p_{1,2}^t-\al_{1,2,1}^tp_{1,1}^t$ are linearly independent.
Indeed, let $t^{b_{1,1}}q_{1,1}(x)$ be the main term of $p_{1,1}(x)$ and let
$\deg q_{1,1}=c_{1,1}$. Let $t^{b_{1,2}}q_{1,2}(x)$ be the main term of
$p_{1,2}(x)$. We set $\al_{1,2,1}^t=\beta
a^t_{1,2,d_{1,2}-c_{1,1}}/a^t_{1,1,d_{1,1}-c_{1,1}}$ if
$q_{1,2}(x)=\beta q_{1,1}(x)$ for some $\beta\in\C$
and $\al_{1,2,1}^t=0$ otherwise.

Similarly, there exist $\al_{i,j,r}^t\in\C$ such that for
$i=1,\dots,k$, the main terms as $t\to\infty$ of polynomials
\be \tilde p_{i,j}^t(x)=p_{i,j}^t(x)+\sum_{r=1}^{j-1} \al_{i,j,r}^t
p_{i,r}(x), \ee $j=1,\dots,n_i$, are linearly independent.

If $a_{i,j,s}^t$ tend to infinity and all $a_{i,r,l}$ with $r<j$
remain bounded, then the $s$-th coefficient of $\tilde p_{i,j}^t(x)$
tends to infinity. It follows that $\Wr^d(\tilde
p_{1,1}^t(x)Q_1^x,\dots ,\tilde p_{k,n_k}^t(x)Q_k^x)$ has unbounded
coefficients as $t\to\infty$. But it is equal to
$\Wr^d(p_{1,1}^t(x)Q_1^x,\dots ,p_{k,n_k}^t(x)Q_k^x)$, which does not
depend on $t$. It is a contradiction.

Therefore the number of tuples $V_1\in Gr(n_1,l),\dots,V_k\in
Gr(n_k,l)$ such that $\Wr^d_{\bs Q}(V_1\times\dots\times V_k)=\C
w(x)$, and such that $\deg p_{i,j}(x)=d_{i,j}$, where $p_{i,j},$
$j=1,\dots,n_i$, is the standard basis for $V_i$, is finite for each
choice of $d_{i,j}$. The proposition follows.
\end{proof}

We call a space $V$ of quasi-exponentials with bases $\bs Q$ a {\it
weight zero space} if for any two quasi-exponential functions
$p_1(x)Q^x, p_2(x)Q^x$ in $V$ with the same base and a natural number $j$
such that $\deg p_1(x)<j<\deg p_2(x)$ there exists a quasi-exponential function
$p(x)Q^x$ in $V$ with $\deg p(x)=j$.

\begin{cor}\label{weight zero}
If Theorem \ref{theorem 1} holds for weight zero spaces
of quasi-exponentials, then it holds for all spaces of
quasi-exponentials.
\end{cor}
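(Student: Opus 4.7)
\emph{Proof plan.}  The plan is a deformation argument using the finiteness of the discrete Wronski map from Proposition~\ref{d finite} together with the closedness of the real locus of the Grassmannian.  The idea is to realize a general space $V$ satisfying the hypotheses of Theorem~\ref{theorem 1} as a limit of preimages of nearby Wronskian data whose preimages are all (maximally) weight zero, and then conclude that $V$ is real by taking limits.

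View $V$ as a point of $\prod_{i=1}^k Gr(n_i,l)$ for $l$ large enough that $n_{\max}:=lN-\sum_i n_i^2+1>n$.  Under the finite morphism $\Wr^d_{\bs Q}\colon\prod_i Gr(n_i,l)\to Gr(1,n_{\max})$, the image of a point with standard-basis degree sequence $(d_{i,j})$ is a polynomial whose degree equals $\sum_{i,j}d_{i,j}-\sum_i\binom{n_i}{2}$.  This degree attains its maximum $n_{\max}-1$ precisely on the top Schubert stratum, where $d_{i,j}=l-n_i+j-1$ for all $i,j$; at such a point each base $Q_i$ carries the $n_i$ consecutive degrees $l-n_i,l-n_i+1,\dots,l-1$, so the space is weight zero.

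Next I deform $w(x)=\prod_{i=1}^n(x-z_i)$ to $w_t(x):=w(x)\prod_{j=1}^{n_{\max}-1-n}(1-tx/\zeta_j)$ for $t\in(0,1]$, where $\zeta_1,\dots,\zeta_{n_{\max}-1-n}$ are positive real numbers chosen large enough that for every sufficiently small $t>0$ the full root set of $w_t$, namely $z_1,\dots,z_n$ together with $\zeta_j/t$, is real and still satisfies the separation hypothesis of Theorem~\ref{theorem 1} (for part~(2), the $\zeta_j$ are all placed inside one of the two allowed groups).  As $t\to 0^+$, $w_t\to w$ pointwise and $[w_t]\to[w]$ in $Gr(1,n_{\max})$ after projective rescaling.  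For every $t>0$, the identity $\deg w_t=n_{\max}-1$ together with the Schubert-stratum analysis above forces every preimage of $w_t$ under $\Wr^d_{\bs Q}$ to lie in the top stratum, so each such preimage is weight zero, and therefore real by the assumed weight zero case of Theorem~\ref{theorem 1}.

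Finally, since $\Wr^d_{\bs Q}$ is finite and hence proper, the usual path-lifting property yields a sequence of preimages $V_{t_k}$ of $w_{t_k}$ with $V_{t_k}\to V$; each $V_{t_k}$ is real, and the real locus of $\prod_i Gr(n_i,l)$, being the fixed-point set of the continuous involution of complex conjugation, is closed.  Consequently $V$ is real.  The main technical point is the Schubert-stratum analysis, namely verifying that every preimage of a polynomial of maximal degree lies in the top stratum of the source; this reduces to the observation that $\sum_{i,j}d_{i,j}$ is maximized only at $d_{i,j}=l-n_i+j-1$, after which finiteness, properness, and closedness take over in a routine way.
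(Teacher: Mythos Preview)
Your argument is correct and follows essentially the same approach as the paper's: both rely on the finiteness of the discrete Wronski map (Proposition~\ref{d finite}), the density of the weight zero (top Schubert) stratum in the source, and the closedness of the real locus.  The paper's two-line proof simply asserts that finiteness reduces the question to generic $\bs z$ and that weight zero spaces are dense, leaving implicit exactly the step you carry out explicitly --- namely, approximating the given Wronskian in $Gr(1,n_{\max})$ by full-degree polynomials obtained by appending extra real roots tending to infinity, so that the separation hypothesis is preserved and every preimage lies in the top stratum.  Your degree computation $\deg w=\sum_{i,j}d_{i,j}-\sum_i\binom{n_i}{2}$, with maximum attained only at $d_{i,j}=l-n_i+j-1$, is exactly what justifies the paper's claim that generic fibers are weight zero.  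The one place worth a word of care is the lifting step: ``the usual path-lifting property'' for a finite map holds here because a finite morphism between smooth varieties of the same dimension is flat, hence open with fibers of constant length, so every preimage of $[w]$ is indeed a limit of preimages of $[w_t]$.
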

\begin{proof}
By Proposition \ref{d finite},
it is sufficient to prove Theorem \ref{theorem 1}
for generic values of $z_1,\dots,z_n$.

The set of points $V\in Gr(n,l)$ with the standard basis $p_i(x)$,
and $\deg p_{n+1-i}(x)=l-i$, $i=1,\dots,n$, is dense in
$Gr(n,l)$. Therefore, the set of points in $Gr(n_1,l)\times \dots
\times Gr(n_k,l)$ corresponding to weight zero spaces of
quasi-exponentials is dense. Therefore, Corollary \ref{weight zero}
follows from Proposition \ref{d finite}.
\end{proof}

\subsection{Reduction to the case of generic $\bs Q$}
In this section we show that it is sufficient to prove
Theorem \ref{theorem 1} for the case of generic $\bs Q=(Q_1,\dots,Q_N)$.

Fix some natural number $d>N$. For $i=1,\dots N$, let
$q_i(x)=\sum_{j=0}^{d-1}q_{i,j}x^j$. Set
\bean\label{p=q 1}
p(x,Q,\bs Q)=x^d+\sum_{j=1}^N\prod_{r=1}^{j-1}(Q-Q_r)q_j(x).
\eean

\begin{lem} The function
\beq
\label{WxQ}
W(x,\bs Q)=\frac{ \prod_{i=1}^N Q_i^{-x}}{\prod_{i<j}(Q_j-Q_i) }
\on{Wr}^d \bigl(p(x,Q_1,\bs Q)Q_1^x,\dots, p(x,Q_N,\bs Q)Q_N^x\bigr)
\eeq
is a polynomial in variables $x,Q_1,\dots,Q_N$.
\end{lem}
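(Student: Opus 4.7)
The plan is to prove this by a standard Vandermonde-style divisibility argument applied to the determinant. First, I would pull the exponential factors out of each row of the discrete Wronskian. Writing $f_i(x)=p(x,Q_i,\bs Q)Q_i^x$, the $(i,s)$-entry of the matrix in \Ref{dwr} is
\[
f_i(x+s-1)\,=\,Q_i^x\cdot p(x+s-1,Q_i,\bs Q)\,Q_i^{s-1},
\]
so the row factor $Q_i^x$ cancels the $\prod_i Q_i^{-x}$ in front, and
\[
W(x,\bs Q)\,=\,\frac{\det\bigl(p(x+s-1,Q_i,\bs Q)\,Q_i^{s-1}\bigr)_{i,s=1}^{N}}
{\prod_{i<j}(Q_j-Q_i)}\,.
\]
The numerator is manifestly an element of $\C[x,Q_1,\dots,Q_N]$, so it is enough to show that the denominator $\prod_{i<j}(Q_j-Q_i)$ divides it in this polynomial ring.

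The next step is to verify that the numerator vanishes identically on each hypersurface $\{Q_i=Q_j\}$, $i\neq j$. Here the precise shape of \Ref{p=q 1} is irrelevant: the polynomial $p(x,Q,\bs Q)=x^d+\sum_{j=1}^N\prod_{r=1}^{j-1}(Q-Q_r)q_j(x)$ depends on the auxiliary argument $Q$ only as a substitution variable, so specializing $Q_i=Q_j$ one has $p(y,Q_i,\bs Q)=p(y,Q_j,\bs Q)$ as polynomials in $y$, and also $Q_i^{s-1}=Q_j^{s-1}$. Consequently the $i$-th and $j$-th rows of the matrix in the numerator become literally equal and the determinant vanishes. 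Since the linear polynomials $Q_j-Q_i$ for $i<j$ are pairwise coprime irreducibles in $\C[Q_1,\dots,Q_N]$, their product divides the numerator, establishing that $W(x,\bs Q)$ is a polynomial.

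There is no serious obstacle to this argument: it is a Vandermonde-type cancellation, and the only substantive check is the row-coincidence at $Q_i=Q_j$. The only point to be careful about is distinguishing the free argument $Q$ in the definition of $p(x,Q,\bs Q)$ from the parameters $Q_1,\dots,Q_N$ used inside the product $\prod_{r=1}^{j-1}(Q-Q_r)$; once this is unpacked the vanishing is immediate and the rest is bookkeeping.
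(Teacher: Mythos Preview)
Your argument is correct and is essentially the same as the paper's proof, only spelled out in more detail: the paper simply notes that if $Q_i=Q_j$ then the discrete Wronskian vanishes, so all denominators cancel. Your extra step of explicitly factoring out $Q_i^x$ from each row to see that the numerator lies in $\C[x,Q_1,\dots,Q_N]$ before invoking the Vandermonde-type divisibility is a useful clarification the paper leaves implicit.
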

\begin{proof}
If $Q_i=Q_j$, then $\on{Wr}^d
\big(p(x,Q_1,\bs Q)Q_1^x,\dots, p(x,Q_N,\bs Q)Q_N^x\big)=0$.
Therefore, all denominators cancel.
\end{proof}

Fix natural numbers $n_1,\dots,n_k$, such that $\sum_{i=1}^kn_i=N$.

For $\sum_{j=1}^{s-1} n_j <i\leq \sum_{j=1}^{s} n_j$ we set $m(i)=s$,
$r(i)=i-\sum_{j=1}^{s-1} n_j-1$. Let $\bs Q^0=(Q_1^0,\dots,Q_1^0,
\dots,\alb Q_k^0,\dots,Q_k^0)$, where $Q_i^0$ repeats $n_i$ times. The
$i$-th coordinate of $\bs Q^0$ is $Q_{m(i)}^0$.

We present an explicit set of quasi-exponentials whose
discrete Wronskian equals $W(x,\bs Q^0)$ up to an explicit factor.
Let
\beq\label{p=q 2}
p_i^0(x)=(Q^{-r(i)}(x+Q\partial_Q)^{r(i)}
p(x,Q,\bs Q^0))|_{Q=Q_{m(i)}^0}.
\eeq
Clearly $p_i^0(x)$ is a polynomial in $x$ of degree $d+r(i)$.

\begin{lem} We have
\bean
W(x,\bs Q^0)&= &c(\bs Q^0)
\on{Wr}^d(p_1^0(x)(Q_{m(1)}^0)^x,\dots, p_N^0(x)(Q_{m(N)}^0)^x),\label{expl}\\
c(\bs Q^0)&=&\frac{\prod_{i=1}^k (Q_i^0)^{-n_ix}}{\prod_{1\leq i<j\leq k} (Q_j^0-Q_i^0)^{n_in_j} \prod_{i=1}^k\prod_{j=1}^{n_i-1} (n_i-j)^j}\ .\notag
\eean
\end{lem}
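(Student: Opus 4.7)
The plan is to evaluate $W(x,\bs Q^0)$ as a limit of $W(x,\bs Q)$ along a one-parameter path $\bs Q(t)\to\bs Q^0$, using the preceding lemma, which tells us $W(x,\bs Q)$ is a polynomial (hence continuous) in $x,Q_1,\ldots,Q_N$. Both the discrete Wronskian in the numerator of \Ref{WxQ} and the Vandermonde $\prod_{i<j}(Q_j-Q_i)$ in the denominator vanish at $\bs Q^0$, because the $Q_i$'s within each block coincide; the limit is extracted by in-block row operations of confluent-Vandermonde type.

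Concretely, I set $Q_{i_s+r}(t)=Q_s^0 e^{tr}$ for $r=0,\ldots,n_s-1$ in each block $s$ (with $i_s=n_1+\cdots+n_{s-1}+1$), and put $u_{i_s+r}=\log(Q_{i_s+r}(t)/Q_s^0)=tr$. Let $\tilde R(Q,\bs Q):=(p(x+j-1,Q,\bs Q)Q^{x+j-1})_{j=1}^N$ be the row-vector appearing in the discrete Wronskian, so that the $i$-th row at time $t$ is $\tilde R(Q_i(t),\bs Q(t))$. The denominator $\prod_{i<j}(Q_j(t)-Q_i(t))$ factors into a cross-block part converging to $\prod_{1\le s<t\le k}(Q_t^0-Q_s^0)^{n_sn_t}$ and, for each block $s$, an in-block part equal to $(Q_s^0)^{\binom{n_s}{2}}t^{\binom{n_s}{2}}\prod_{r=1}^{n_s-1}r!+O(t^{\binom{n_s}{2}+1})$. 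I resolve the singularity in the numerator by replacing, within each block $s$, the row of index $i_s+r$ by the $r$-th divided difference in $u$ of the rows $\tilde R(Q_{i_s},\bs Q),\ldots,\tilde R(Q_{i_s+r},\bs Q)$; this divides the determinant by $\prod_{0\le a<b\le n_s-1}(u_{i_s+b}-u_{i_s+a})$, and the standard confluent-Vandermonde limit yields, as $t\to0$, that the replaced row converges to $\tfrac{1}{r!}(Q\partial_Q)^r\bar R(Q_s^0)$, where $\bar R(Q):=\tilde R(Q,\bs Q^0)$.

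Finally, using the intertwining identity $(Q\partial_Q)^r\bigl(f(Q)Q^m\bigr)=Q^m(m+Q\partial_Q)^rf$ with $m=x+j-1$ together with the definition \Ref{p=q 2} of $p_i^0$, the $j$-th entry of $(Q\partial_Q)^r\bar R(Q_s^0)$ equals $(Q_s^0)^{r}\,p^0_{i_s+r}(x+j-1)(Q_s^0)^{x+j-1}$, which is $(Q_s^0)^{r}$ times the $(i_s+r,j)$-entry of $\on{Wr}^d\bigl(p^0_i(Q^0_{m(i)})^x\bigr)$. Pulling the factor $(Q_s^0)^{r}$ out of each row contributes a total factor $\prod_s(Q_s^0)^{\binom{n_s}{2}}$ that cancels exactly the matching factor from the in-block Vandermonde; the surviving factorials, combined via the elementary identity $\prod_{j=1}^{n_s-1}(n_s-j)^j=\prod_{r=1}^{n_s-1}r!$ (both products equal $\prod_{k=1}^{n_s-1}k^{n_s-k}$), together with the cross-block denominator and the numerator $\prod_{i=1}^N Q_i(t)^{-x}\to\prod_{s=1}^k(Q_s^0)^{-n_sx}$, produce exactly $c(\bs Q^0)$. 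The main point requiring care is that $\tilde R(Q,\bs Q)$ depends on $\bs Q$ as well as on $Q$, so the naive ``row tends to a $Q$-derivative'' recipe is not a priori justified; this is handled by writing each row as $\hat R(u_{i_s+r},t)$ with $\hat R(u,t):=\tilde R(Q_s^0e^u,\bs Q(t))$ and observing, via a bivariate Taylor expansion at $(u,t)=(0,0)$, that the $\bs Q(t)$-dependence in the second slot of $\hat R$ only contributes terms of strictly positive order in $t$, which are annihilated by the $r$-th divided difference in $u$ as $t\to0$.
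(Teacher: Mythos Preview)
Your argument is correct and follows the same strategy as the paper: both compute $W(x,\bs Q^0)$ as a confluent limit of $W(x,\bs Q)$ along a one-parameter family, resolving the $0/0$ singularity by in-block row operations that become derivatives in the limit. The only substantive difference is the choice of deformation. The paper takes the additive path $Q^0_{m(i)}+r(i)h$ and uses iterated finite differences $\tau^{(r)}_{Q,h}$, which tend to $\partial_Q^r$; you take the multiplicative path $Q^0_s e^{tr}$ and use divided differences in $u=\log(Q/Q^0_s)$, which tend to $(Q\partial_Q)^r/r!$. Your choice is slightly cleaner here: since $p_i^0$ is defined via $(x+Q\partial_Q)^{r(i)}$, your limiting row $(Q\partial_Q)^r(p(y,Q,\bs Q^0)Q^y)=Q^y(y+Q\partial_Q)^r p$ matches the target row on the nose (up to the scalar $(Q_s^0)^r$), whereas the paper's limiting row $\partial_Q^r(pQ^y)=Q^{y-r}\prod_{a=0}^{r-1}(Q\partial_Q+y-a)\,p$ differs from the target by lower-order terms in $(y+Q\partial_Q)$ and so implicitly requires one more triangular row reduction that the paper leaves to the reader. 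You also make explicit the point that the $\bs Q$-dependence in the second slot of $p(x,Q,\bs Q)$ contributes only $O(t)$ after the divided difference, a subtlety the paper passes over; your phrasing ``annihilated by the divided difference'' is slightly loose---what actually happens is that the divided difference of the $O(t)$ remainder stays bounded, so the extra factor of $t$ kills it---but the conclusion is right.
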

\begin{proof}
For a function $f(Q)$, we call
\be
\tau_{Q,h}^{(1)}f(Q)=\frac{f(Q+h)-f(Q)}{h}
\ee
the {\it discrete derivative of $f$}. The {\it $n$-th discrete
derivative of function $f(Q)$}, is defined recursively
$\tau_{Q,h}^{(n)}f(Q)=\tau_{Q,h}\tau_{Q,h}^{(n-1)}f(Q)$. If $f(Q)$ is a
smooth function, then $\lim_{h\to
0}\tau_{Q,h}^{(n)}f(Q)=f^{(n)}(Q) $, where $f^{(n)}(Q)$ is the
$n$-th derivative of $f(Q)$ with respect to $Q$.

Let
\be
\bs Q^0_h=(Q_1^0,Q_1^0+h,\dots,
Q_1^0+(n_i-1)h,\dots, Q_k^0,Q_k^0+h,\dots,Q_k^0+(n_k-1)h),
\ee
where $h$ is small, and we assume that the argument of $Q^0_i+jh$ continuously
depends on $h$. Since the function $W(x,\bs Q)$ is a polynomial, we can compute
$W(x,\bs Q^0)$ as the limit $\,\lim_{h\to 0}W(x,\bs Q^0_h)$.

Taking suitable linear combinations of rows of the matrix used to compute
the discrete Wronskian in formula \Ref{WxQ} for $W(x,\bs Q^0_h)$, we obtain
the matrix whose $(i,j)$ entry equals
\beq
\label{ijentry}
\bigl(\tau^{(r(i))}_{Q,h}(p(x+j-1,Q,\bs Q^0_h)(Q)^{x+j-1})\bigr)
\big|_{Q=Q^0_{m(i)}+\,r(i)h}
\eeq
and whose determinant equals
\be
\prod_{i=1}^k h^{n_i(n_i-1)/2}\;
\on{Wr}^d \bigl(p(x,Q_1,\bs Q)Q_1^x,\dots, p(x,Q_N,\bs Q)Q_N^x\bigr)
\big|_{\bs Q=\bs Q^0_h}\,.
\ee
Comparing expression \Ref{ijentry} with the right hand side of \Ref{p=q 2},
we get formula \Ref{expl} from formula \Ref{WxQ} in the limit $h\to 0$.
\end{proof}

\begin{prop}
Assume that Theorem \ref{theorem 1} holds for generic values of $\bs Q$.
Then Theorem \ref{theorem 1} holds for all $\bs Q\in(\R^\times)^N$.
\end{prop}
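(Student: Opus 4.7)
The strategy is to realize $V$ as the $h \to 0$ limit of an analytic family of spaces $V_h$ of quasi-exponentials with generic real bases $\bs Q^0_h$, then conclude reality of $V$ by applying the assumed generic case of Theorem \ref{theorem 1} to each $V_h$ and using closedness of the real locus in the ambient Grassmannian. The two preceding lemmas furnish precisely the lifting machinery needed to do this, via the polynomial family $p(x, Q, \bs Q^0)$ and its discrete derivatives.

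By Corollary \ref{weight zero} I may assume $V$ is weight zero with bases $\bs Q^0 \in (\R^\times)^N$. Choose polynomials $q_1(x), \ldots, q_N(x) \in \C_d[x]$ so that the quasi-exponentials $p_i^0(x)(Q_{m(i)}^0)^x$ defined by \eqref{p=q 2} form a basis of $V$; this is possible because the map sending $(q_j)$ to the span of these quasi-exponentials surjects onto weight-zero spaces with the prescribed multiplicity pattern (verified by a dimension/Jacobian check). For small real $h \neq 0$, let $\bs Q^0_h = (Q_1^0, Q_1^0 + h, \ldots, Q_k^0, \ldots, Q_k^0 + (n_k-1)h)$ as in the proof of the second lemma, and let $V_h$ be the $N$-dimensional space spanned by $p(x, Q_i, \bs Q^0_h)(Q_i)^x$ for $Q_i$ ranging over the entries of $\bs Q^0_h$. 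For small $h \neq 0$ the entries are distinct and real, so $V_h$ has generic real bases, and $V_h \to V$ in a suitable ambient Grassmannian as $h \to 0$. By the second preceding lemma, the Wronskian polynomial of $V_h$ is polynomial in $(x, h)$ and specializes at $h = 0$ to $\prod(x - z_i)$ times a nonzero constant. Hence its roots depend continuously on $h$ and converge to $z_1, \ldots, z_n$, remaining real and simple for small real $h$.

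The generic case of Theorem \ref{theorem 1} applies to each $V_h$ provided its Wronskian roots satisfy the separation $\geq 1$, which may fail by continuity if the original $z_i$'s are only weakly separated (equality $|z_i - z_j| = 1$). To bypass this I use a coupled deformation: pick $z_i'$ near $z_i$ with strict separation $|z_i' - z_j'| > 1$, invoke the finiteness of the discrete Wronski map (Proposition \ref{d finite}) to continuously lift to weight-zero spaces $V'$ near $V$ whose Wronskian is $\prod(x - z_i')\prod Q_j^x$, and apply the above family construction to each such $V'$. For $h$ sufficiently small (depending on the $z'$-perturbation), the Wronskian roots of $V_h'$ strictly satisfy the separation hypothesis, so $V_h'$ is real by the assumed generic case; passing to the limit $h \to 0$ (using closedness of the real locus) shows $V'$ is real, and then letting $z_i' \to z_i$ shows $V$ is real. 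The main obstacle is coordinating the two perturbations so that strict separation is maintained along the $h$-family; this is handled by the diagonal choice of $h$ small after fixing $z'$.
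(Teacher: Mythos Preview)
Your argument is essentially correct in outline, but it takes a more circuitous route than the paper and leaves several steps as assertions.

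The key difference is what gets held fixed. You fix the space $V$ (via a choice of $q_j$'s) and deform $\bs Q^0$ to $\bs Q^0_h$; this forces the Wronskian roots to drift, which is why you need the secondary $z'$-perturbation and the coupled two-parameter limit. The paper instead fixes the Wronskian roots $z_1,\dots,z_n$ (which already satisfy the separation hypothesis of Theorem~\ref{theorem 1}) and treats
\[
W(x,\bs Q)=\prod_{s=1}^n(x-z_s)
\]
as a system of polynomial equations in the unknowns $q_{i,j}$ with parameters $Q_1,\dots,Q_N$. Since solutions are real for generic real $\bs Q$ by hypothesis, and the system depends polynomially on $\bs Q$, solutions are real for all real $\bs Q$; the separation condition never enters again because the $z_s$ never move. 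This is the entire content of the paper's proof, and it is why the first of the two preceding lemmas establishes that $W(x,\bs Q)$ is a polynomial in $Q_1,\dots,Q_N$ --- a fact you use only for continuity in $h$, which is weaker than what is available.

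Your approach does work, but it carries extra baggage: the surjectivity claim ``verified by a dimension/Jacobian check'' is not actually verified (and is not entirely obvious, since the degree pattern of the $p_i^0$'s is $d,d+1,\dots,d+n_m-1$ in each block with the same $d$ across blocks); the convergence $V_h\to V$ in an ``ambient Grassmannian'' needs to be made precise; and ``generic $\bs Q$'' may be strictly stronger than ``distinct $Q_i$'', so you should say the generic case applies for generic small $h$ rather than all small $h\ne 0$. None of these are fatal, but the paper's argument sidesteps all of them by working purely algebraically with the $z_s$ fixed.
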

\begin{proof}
Let $z_1,\dots,z_n$ be real, satisfying one of the conditions in
Theorem \ref{theorem 1}. Consider the equation $W(x,\bs
Q)=\prod_{s=1}^n(x-z_s)$ as a system of equations on variables
$q_{i,j}$ depending on
parameters $Q_1,\dots,Q_N$. It is a system of algebraic equations with
polynomial dependence on parameters. By the assumption all
solutions of the system for generic real values of $Q_1,\dots,Q_N$ are real.
Therefore, for all real $Q_1,\dots,Q_N$, all solutions of the system are real.
This proves Theorem \ref{theorem 1}
for the weight zero spaces of quasi-exponentials.
Then the proposition follows from Corollary \ref{weight zero}.
\end{proof}

\subsection{Bethe algebra}
In this section we recall some results of \cite{MTV2}, \cite{MTV3}.

Let $W=\C^N$ with a chosen basis $v_1,\dots,v_N$.

For an operator $M\in\on{End} W$, we denote $M^{(i)}=1^{\otimes(
i-1)}\otimes M\otimes 1^{\otimes(n-i)}$. Similarly, for an operator
$M\in\on{End}(W^{\otimes 2})$, we denote by $M^{(ij)}\in
\on{End}(W^{\otimes n})$ the operator acting as $M$ on the $i$-th and
$j$-th factors of $W^{\otimes n}$.

Let $R(x)=x+P \in\on{End}(W^{\otimes 2})$ be the {\it
rational $R$-matrix}. Here $P \in\on{End}(W^{\otimes 2})$ is the
flip map: $P(x\otimes y)=y\otimes x$ for all $x,y\in W$.
Let $E_{ab}\in\on{End} W$ be the
linear operator with the matrix $(\delta_{ia}\delta_{jb})_{i,j=1}^N$.

Let the {\it Yangian} $Y(\gln)$ be the complex unital associative algebra
with generators
$T_{ab}^{\{s\}}$, $a,b=1,\dots,N$, $s\in\Z_{\geq 1}$, and relations
\bean\label{rel}
R^{(12)}(x-y)T^{(13)}(x)T^{(23)}(y)=T^{(23)}(y)T^{(13)}(x)R^{(12)}(x-y),
\eean
where $T(x)=\sum_{a,b=1}^N E_{ab}\otimes T_{ab}(x)$ and
$T_{ab}(x)=\delta_{ab}+\sum_{s=1}^\infty T_{ab}^{\{s\}}x^{-s}$.

The Yangian $Y(\gln)$ is a Hopf algebra, and the coproduct is given by
\be
\Delta(T_{ab}(x))=\sum_{i=1}^NT_{ib}(x)\otimes T_{ai}(x).
\ee
The Yangian $Y(\gln)$ is a flat deformation of $U\gln[t]$, the universal
enveloping algebra of the current algebra $\gln[t]$.

Given $z\in \C$, define the $Y(\gln)$-module structure on the
space $W$ by letting
$T_{ab}(x)$ act as $E_{ba}/(x-z)$. We denote this module $W(z)$ and call it
the {\it evaluation module}.

\medskip
For a matrix $M=(M_{ij})$
with possibly noncommuting entries, we define the row determinant by
$
\on{rdet}(M)=\sum_{\sigma\in S_N} M_{1\sigma(1)}M_{2\sigma(2)}
\dots M_{N\sigma(N)}.
$

Let $\bs Q=(Q_1,\dots,Q_N)\in(\C^\times)^N$. Let
$Q=\on{diag}(Q_1,\dots,Q_N)$ be the diagonal matrix with diagonal
entries $Q_i$ .
Let $\partial=\partial/\partial x$. Define the universal difference operator by
\be
\mc D_{\bs Q}=\on{rdet}(1-QT(x)e^{-\partial}).
\ee
Write
\be
\mc D_{\bs Q}=1-B_{1,\bs Q}(x)e^{-\partial}+B_{2,\bs Q}(x)e^{-2\partial}-
\dots +(-1)^N B_{N,\bs Q}(x)e^{-N\partial}.
\ee
Then $B_{i,\bs Q}(x)$ are series in $x^{-1}$ with
coefficients in $Y(\gln)$. The series $B_i(x)$
coincides with the higher transfer-matrices, see \cite{CT}, \cite{MTV2}.

We call the unital subalgebra of $Y(\gln)$
generated by the
coefficients of the series $B_{i,\bs Q}(x)$, $i=1,\dots,N$, the {\it Bethe
algebra} and denote it by $\mc B_{\bs Q}$.
It is known that the Bethe algebra is commutative, see \cite{KS}.

\medskip

Let $\bs z=(z_1,\dots,z_n)\in\C^n$. Let
$\bs W(\bs z)=W(z_1)\otimes \dots\otimes W(z_n)$ be
the tensor product of the evaluation modules.

Let $\bar B_{i,\bs Q}(x)$, $i=1,\dots,N,$ be the image of $B_{i,\bs Q}(x)$
in $(\on{End }\bs W(\bs z))[[x^{-1}]]$. The series $\bar B_{i,Q}(x)$
is summed up to a rational function in $x$.

Let $K_i$, $i=1,\dots,n$, be the {\it $qKZ$ Hamiltonians} in $\bs W(\bs z)$:
\bean\label{qkz op}
K_i= R^{(i,i-1)}(z_i-z_{i-1})\dots R^{(i,1)}(z_i-z_{1})
Q^{(i)}R^{(i,n)}(z_i-z_n)\dots R^{(i,i+1)}(z_i-z_{i+1}).
\eean

\begin{lem}\label{qKZ} For $i=1,\dots,n$, we have
\be
K_i=\prod_{j,j\neq i}(z_i-z_j)\on{Res}_{x=z_i} \bar B_{1,\bs Q}(x).
\ee
In particular, $K_i$ belongs to the image of $\mc B_{\bs Q}$ in
$\on{End }\bs W(\bs z)$.
\end{lem}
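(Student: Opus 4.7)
The plan is to identify $\bar B_{1,\bs Q}(x)$ explicitly as a twisted trace of the monodromy matrix on $\bs W(\bs z)$, factor that monodromy through its $L$-operators, and extract the residue at $x=z_i$ by inspection.

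First, I would read off the coefficient of $e^{-\partial}$ from $\mc D_{\bs Q}=\on{rdet}\bigl(1-QT(x)e^{-\partial}\bigr)$: only the identity permutation of the row-determinant expansion contributes, since any nontrivial permutation would require a second factor of $e^{-\partial}$, giving
\be
B_{1,\bs Q}(x)\ =\ \sum_{a=1}^N Q_a\,T_{aa}(x)\ =\ \on{tr}_0\bigl(Q^{(0)}T(x)\bigr),
\ee
with $0$ labelling an auxiliary copy of $W$ and $T(x)=\sum_{a,b}E_{ab}^{(0)}\otimes T_{ab}(x)$. Next, combining the coproduct $\Delta(T_{ab}(x))=\sum_c T_{cb}(x)\otimes T_{ac}(x)$ with the evaluation rule $T_{ab}(x)\mapsto \delta_{ab}+E_{ba}/(x-z)$ realizes $\bar T(x)$ on $\bs W(\bs z)$ as the ordered product of $L$-operators
\be
\bar T(x)\ =\ L^{(0,n)}(x)\,L^{(0,n-1)}(x)\cdots L^{(0,1)}(x),\qquad L^{(0,j)}(x)\ =\ \frac{R^{(0,j)}(x-z_j)}{x-z_j}\,.
\ee

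Since only $L^{(0,i)}(x)$ has a pole at $x=z_i$, with residue $P^{(0,i)}$, I would then obtain
\be
\Res_{x=z_i}\bar T(x)\ =\ \frac{R^{(0,n)}(z_i-z_n)\cdots R^{(0,i+1)}(z_i-z_{i+1})\,P^{(0,i)}\,R^{(0,i-1)}(z_i-z_{i-1})\cdots R^{(0,1)}(z_i-z_1)}{\prod_{j\neq i}(z_i-z_j)}\,.
\ee
To compute $\on{tr}_0\bigl(Q^{(0)}\Res_{x=z_i}\bar T(x)\bigr)$ I would use two elementary facts about the flip: the swap relation $P^{(0,i)}X^{(0)}=X^{(i)}P^{(0,i)}$ and the partial-trace identity $\on{tr}_0\bigl(X^{(0)}P^{(0,i)}\bigr)=X^{(i)}$. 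Direct bookkeeping converts every $R^{(0,j)}$ into $R^{(i,j)}$ and $Q^{(0)}$ into $Q^{(i)}$, and collapses $P^{(0,i)}$, leaving exactly the ordered product defining $K_i$ in \Ref{qkz op} divided by $\prod_{j\neq i}(z_i-z_j)$. Thus
\be
\Res_{x=z_i}\bar B_{1,\bs Q}(x)\ =\ \frac{K_i}{\prod_{j\neq i}(z_i-z_j)},
\ee
which is equivalent to the asserted equality. The last sentence of the lemma then follows because the rational function $\bar B_{1,\bs Q}(x)$ takes values in the image of $\mc B_{\bs Q}$ at every $x$, hence so do its residues.

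The only step with any real content is the identification of $\bar T(x)$ as an ordered product of $L$-operators: this is a standard consequence of the Hopf structure of $Y(\gln)$ and the evaluation representation, but the ordering of the factors (dictated by the coproduct convention) must be fixed with care, since it determines the order in which the $R$-matrices and the twist $Q^{(i)}$ will appear in $K_i$.
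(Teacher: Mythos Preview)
Your proposal is correct and is precisely the ``direct computation'' the paper alludes to but does not spell out: identify $B_{1,\bs Q}(x)=\on{tr}_0\bigl(Q^{(0)}T(x)\bigr)$, factor the image of $T(x)$ on $\bs W(\bs z)$ as an ordered product of $L$-operators, take the residue at $x=z_i$, and use the swap and partial-trace identities for $P^{(0,i)}$ together with cyclicity of $\on{tr}_0$ to recover the qKZ operator \Ref{qkz op}. There is no alternative approach to compare; you have simply supplied the details the paper omits, and your cautionary remark about fixing the ordering of the $L$-operators from the coproduct convention is well taken.
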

\begin{proof}
The formula is proved by a direct computation.
\end{proof}

The space $W^{\otimes n}$ has the standard {\em tensor Shapovalov form}:
\be
\langle v_{a_1}\otimes\dots\otimes v_{a_n},
v_{b_1}\otimes\dots\otimes v_{b_n}\rangle=\prod_{i=1}^n\delta_{a_ib_i}.
\ee
Recall that the module $\bs W(\bs z)$ as a vector space
is identified with $W^{\otimes n}$.
Let $\langle \ ,\ \rangle_{\bs R}$ be the form on $\bs W(\bs z)$ defined by
\bea
\langle v,w\rangle_{\bs R}&=&\langle v, \bs R w\rangle,\\
\bs R &=& R^{(n-1,n)}(z_{n-1}-z_n)\dots R^{(2,n)}(z_2-z_n) \dots
R^{(2,3)}(z_2-z_3)\times \\
&&R^{(1,n)}(z_1-z_n)\dots R^{(1,3)}(z_1-z_3) R^{(1,2)}(z_1-z_2).
\eea
We call this form the {\it Yangian form} .

\begin{lem}\label{cite1}(\cite{MTV2})
For any $b\in \mc B_{\bs Q}$, $v,w\in\bs W(\bs z)$ we have
\be
\langle bv,w\rangle_{\bs R}=\langle v,bw\rangle_{\bs R}.
\vv->
\ee
\qed
\end{lem}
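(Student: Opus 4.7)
The plan is to exhibit an anti-automorphism of $Y(\gln)$ that (i) preserves the Bethe subalgebra $\mc B_{\bs Q}$ and (ii) implements the adjoint operation with respect to the Yangian form. Let $\tau\colon Y(\gln)\to Y(\gln)$ be the \emph{transposition} anti-automorphism, defined on generators by $\tau(T_{ab}(x))=T_{ba}(x)$. It is straightforward to verify that $\tau$ preserves the RTT relation \Ref{rel} since the rational $R$-matrix $R(x)=x+P$ satisfies $R(x)^{t_1 t_2}=R(x)$. The statement of the lemma will follow at once from the two facts
$$\tau(b)=b\quad\text{for all }b\in\mc B_{\bs Q},\qquad
\langle b v,w\rangle_{\bs R}=\langle v,\tau(b)w\rangle_{\bs R}\ \ \text{for all }b\in Y(\gln).$$

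First I would establish the $\tau$-invariance of $\mc B_{\bs Q}$. Under $\tau$, the row-determinant $\mc D_{\bs Q}=\on{rdet}(1-QT(x)e^{-\pa})$ becomes a column-determinant of the same matrix (note that $Q=\diag(Q_1,\dots,Q_N)$ is already transposition-invariant). Standard identities for quantum minors in $Y(\gln)$, derived from the RTT relations, imply that the row- and column-determinants of $1-QT(x)e^{-\pa}$ coincide as formal series in $e^{-\pa}$ with coefficients in $Y(\gln)$. Comparing coefficients shows $\tau(B_{i,\bs Q}(x))=B_{i,\bs Q}(x)$ for every $i$, so $\tau|_{\mc B_{\bs Q}}=\on{id}$.

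Next I would verify the adjoint formula. On a single evaluation module $W(z)$, the operator $T_{ab}(x)$ acts as $E_{ba}/(x-z)$, and the standard Shapovalov form on $W$ satisfies $\langle E_{ab}v,w\rangle=\langle v,E_{ba}w\rangle$. Hence on $W(z)$ we have $T_{ab}(x)^{*}=T_{ba}(x)=\tau(T_{ab}(x))$ with respect to the tensor Shapovalov form. On the tensor product $\bs W(\bs z)$, the action of $T_{ab}(x)$ is given by the iterated coproduct, and taking Shapovalov-adjoint both transposes each tensor factor and reverses the factor order. The entire purpose of the operator $\bs R$ is to correct this order reversal: a direct induction on $n$ using the RTT relation shows that conjugation by $\bs R$ converts the opposite coproduct action of $T(x)$ to the original one, so that on $\bs W(\bs z)$ one has $\bs R\cdot T_{ab}(x)^{\mathrm{opp}}=\tau(T_{ab}(x))\cdot\bs R$ after identifying the Shapovalov-adjoint of the tensor-product action with $\tau$ applied coordinatewise to the opposite-coproduct action. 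Combining the two, $\langle T_{ab}(x)v,w\rangle_{\bs R}=\langle v,\tau(T_{ab}(x))w\rangle_{\bs R}$, whence by anti-multiplicativity of $\tau$ the same holds for every $b\in Y(\gln)$. Restricting to $b\in\mc B_{\bs Q}$ and using $\tau(b)=b$ yields the lemma.

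The main obstacle is the careful bookkeeping in the second step: the intertwining identity $\bs R\cdot T(x)^{\mathrm{opp}}=T(x)^{\tau}\cdot\bs R$ must be proved with the exact ordering of $R$-matrices used to define $\bs R$ in the excerpt, which requires a somewhat delicate induction. Once that identity is pinned down, everything else is formal from the RTT relation and the definition of the Shapovalov form.
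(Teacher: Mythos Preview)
The paper does not supply a proof of this lemma; it is quoted from \cite{MTV2} and closed with a \qed. Your outline is the standard argument and is essentially the one given in \cite{MTV2}: introduce the transposition anti-involution $\tau$, show it fixes each generator $B_{i,\bs Q}(x)$ of $\mc B_{\bs Q}$, and show that $\tau$ implements the adjoint with respect to $\langle\,,\,\rangle_{\bs R}$ via the intertwining property of $\bs R$.

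Two remarks on the details. In step~(i), applying the anti-automorphism $\tau$ to $\on{rdet}(1-QT(x)e^{-\partial})$ is a bit more delicate than ``row-determinant becomes column-determinant of the same matrix'': since $e^{-\partial}$ shifts the argument of $T(x)$, the reversal of products under $\tau$ also reshuffles the shifts, so the image is the column-determinant of a matrix with arguments shifted differently. The identity you need is precisely the $\tau$-invariance of the (twisted) quantum minors, which is indeed a standard consequence of the RTT relations; an equivalent and perhaps cleaner route is to realize $B_{i,\bs Q}(x)$ as the trace of $Q$ against the fused $L$-operator on $\Lambda^i\C^N$, since traces are manifestly $\tau$-fixed. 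In step~(ii) your diagnosis is exactly right: on each factor $W(z_j)$ the Shapovalov adjoint of $T_{ab}(x)$ is $T_{ba}(x)$, so on the tensor product the Shapovalov adjoint of the $\Delta$-action of $T(x)$ is the $\Delta^{\mathrm{op}}$-action of $\tau(T(x))$, and the product $\bs R$ is precisely the intertwiner $\Delta^{\mathrm{op}}\to\Delta$ for the longest permutation, built by iterating the RTT relation and the Yang--Baxter equation. The ordering of the $R^{(i,j)}(z_i-z_j)$ in the definition of $\bs R$ is exactly the reduced-word ordering that makes this induction go through.
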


Let $v\in \bs W(\bs z)$ be an eigenvector of the Bethe algebra
$\mc B_{\bs Q}$. For $i=1,\dots,N$, let
$B_{i,\bs Q,v}(x)$ be the rational function in $x$
with complex coefficients such that
\be
B_{i,\bs Q}(x)v=B_{i,\bs Q,v}(x)v.
\ee
We denote by $\mc D_{\bs Q,v}$ the scalar difference operator
\be
\mc D_{\bs Q,v}=1-B_{1,\bs Q,v}(x)e^{-\partial}+
B_{2,\bs Q,v}(x)e^{-2\partial}-
\dots +(-1)^N B_{N,\bs Q,v}(x)e^{-N\partial}.
\ee

Given a scalar difference operator $\mc D$,
we call the space of all solutions $f(x)$
of the equation $\mc D f(x)=0$ such that $f(x)$
is a linear combination of quasi-exponential functions
the {\it quasi-exponential kernel of operator $\mc D$}.

Let $\mc U$ be the complex span of $1$-periodic quasi-exponentials
$e^{2\pi \sqrt{-1}kx}$, $k\in\Z$.

\begin{lem}\label{cite2}(\cite{MTV3})
Let $v\in \bs W(\bs z)$ be an eigenvector of the Bethe algebra $\mc
B_{\bs Q}$. Then the quasi-exponential kernel of the operator $\mc
D_{\bs Q,v}$ has the form $V_v\otimes \mc U$, where $V_v$ is an
$N$-dimensional complex space of quasi-exponentials with bases $\bs
Q$, and the discrete Wronskian
$\Wr^d(V_v)=\prod_{i=1}^n(x-z_i)\prod_{j=1}^N Q_j^x$.

Moreover, for generic $\bs z,\bs Q$, and every $N$-dimensional
complex space $V$ of quasi-exponentials with bases $\bs Q$ and
$\Wr^d(V)=\prod_{i=1}^n(x-z_i)\prod_{j=1}^N Q_j^x$, there exists an
eigenvector $v\in \bs W(\bs z)$ of the Bethe algebra $\mc B_{\bs
Q}$ such that the quasi-exponential kernel of the operator $\mc
D_{\bs Q,v}$ has the form $V\otimes \mc U$. \qed
\end{lem}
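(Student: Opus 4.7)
The plan is to analyze the scalar difference operator $\mc D_{\bs Q,v}$ obtained by specializing the universal operator $\mc D_{\bs Q}$ on the eigenvector $v$: first pin down its boundary coefficients, then identify its quasi-exponential kernel, compute the discrete Wronskian, and finally argue the converse by a Bethe-ansatz completeness count. The constant term of $\mc D_{\bs Q,v}$ is $1$ by construction. For the coefficient of $e^{-N\partial}$, expanding the row determinant and tracking the shifts produced by commuting $e^{-\partial}$ past the Yangian entries identifies $B_{N,\bs Q}(x)$ as $Q_1\cdots Q_N$ times the Yangian quantum determinant $\on{qdet} T(x)$. Since $\on{qdet} T(x)$ is central in $Y(\gln)$ and acts on the evaluation module $W(z)$ by $(x-z+1)/(x-z)$, one obtains
$$B_{N,\bs Q,v}(x)\,=\,\Bigl(\prod_{j=1}^N Q_j\Bigr)\prod_{i=1}^n\frac{x-z_i+1}{x-z_i}.$$
Moreover, sending $x\to\infty$ the whole operator converges to the constant-coefficient operator $\prod_{j=1}^N(1-Q_j e^{-\partial})$, whose quasi-exponential kernel is $\bigoplus_{j=1}^N \C\, Q_j^x$.

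Next I would identify the quasi-exponential kernel of $\mc D_{\bs Q,v}$. Clearing the common denominator $\prod_i(x-z_i)$ produces a polynomial-coefficient operator $\tilde{\mc D}$; combining the asymptotic analysis above with a local Frobenius-type analysis at each $z_i$ forces every quasi-exponential solution to have the form $p(x)Q_j^x$ with $p\in\C[x]$, and a rank count of rational solutions (together with the obvious $\mc U$-freedom of multiplication by the $1$-periodic exponentials $e^{2\pi\sqrt{-1}kx}$) yields a kernel of the form $V_v\otimes\mc U$ with $\dim V_v=N$. For the discrete Wronskian, a column-shift manipulation of the Casorati matrix gives the Abel-type identity
$$\Wr^d(f_1,\dots,f_N)(x+1)\,=\,B_{N,\bs Q,v}(x)\,\Wr^d(f_1,\dots,f_N)(x)$$
for any basis $f_1,\dots,f_N$ of $V_v$. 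Substituting the formula for $B_{N,\bs Q,v}$ and dividing by $\prod_j Q_j^x$, the monic representative of $\Wr^d(V_v)$ is forced to be $\prod_{i=1}^n(x-z_i)\prod_{j=1}^N Q_j^x$, as required.

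For the moreover part, I would combine three ingredients. By Lemma \ref{cite1} the Bethe algebra $\mc B_{\bs Q}$ acts on $\bs W(\bs z)$ by operators self-adjoint with respect to the Yangian form; for generic $\bs z,\bs Q$ this form is non-degenerate, so $\mc B_{\bs Q}$ is semisimple on each weight subspace. The assignment $v\mapsto V_v$ is injective on joint eigenlines because distinct joint eigenvalues produce distinct scalar operators $\mc D_{\bs Q,v}$, hence distinct kernels. By Proposition \ref{d finite} the discrete Wronski map is finite of some degree $D$; a Bethe-ansatz completeness identity — the dimension of the relevant weight subspace of $\bs W(\bs z)$ equals $D$ — then forces the map to be a bijection on generic fibres. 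The main obstacle is precisely this completeness count. A standard route is an induction on $N$ (or on $n$) via a coalescence of the $Q_j$'s or a boundary degeneration $z_n\to\infty$, where one must use the non-degeneracy of the Yangian form to guarantee that no joint eigenlines are created or lost along the degeneration; this is the delicate step and the reason the statement is only asserted for generic $\bs z,\bs Q$.
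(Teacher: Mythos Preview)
The paper does not actually prove this lemma: it is stated with a citation to \cite{MTV3} and closed with a \qed, so there is no argument in the paper to compare against. What you have written is, in outline, the shape of the proof one finds in that reference.

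A few comments on your sketch itself. For the first assertion, your identification of $B_{N,\bs Q,v}(x)$ via the quantum determinant and the Abel--Casorati recurrence for the discrete Wronskian is the right mechanism, but the shift in your identity
\[
\Wr^d(f_1,\dots,f_N)(x+1)\,=\,B_{N,\bs Q,v}(x)\,\Wr^d(f_1,\dots,f_N)(x)
\]
needs to be checked carefully against the conventions here (the $e^{-\partial}$'s in the row determinant shift the arguments of successive $T$-factors, so the argument of $B_N$ on the right is displaced by~$N$); getting the roots to land exactly at the $z_i$ rather than at $z_i\pm N$ depends on this. More substantively, your claim that the quasi-exponential kernel has dimension \emph{exactly} $N$ over $\mc U$ is only gestured at (``a rank count of rational solutions''); the upper bound is immediate from the order of the operator, but producing $N$ independent quasi-exponential solutions requires a genuine argument about the local structure of $\mc D_{\bs Q,v}$ at the $z_i$, and that is where the work in \cite{MTV3} lies.

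For the ``moreover'' direction you correctly isolate Bethe completeness as the crux and openly leave it unproved. That is honest, but it means your proposal is a roadmap rather than a proof: the matching of the degree of the discrete Wronski map with the dimension of the relevant weight space is the actual content of the lemma, and it is not something one can reach by the soft deformation argument you sketch without substantial additional input.
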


\subsection{Proof of Theorem \ref{theorem 1} for
the case of generic $Q_1,\dots,Q_N$}\label{end of proof}

Let all $z_1,\dots,z_n$ be real.
Let all $Q_1,\dots,Q_N$ also be real and nonzero.

Let $W^\R$ be the real part of $W$ generated by the chosen basis
$v_1,\dots, v_N$, and let $\bs W^\R(\bs z)=W^\R(z_1)\otimes
\dots\otimes W^\R(z_n)$ be the real part of $\bs W(\bs z)$.
Let $Y^\R(\gln)$ be the real unital algebra generated by $T_{a,b}^{\{s\}}$,
$a,b=1,\dots,N$, $s\in\Z_{\geq 1}$, and relations \Ref{rel}.
Let $\mc B_{\bs Q}^\R\subset Y^\R(\gln)$ be the real subalgebra generated
by the coefficients of the series $B_{i,\bs Q}(x)$, $i=1,\dots,N$.
Clearly, $\mc B_{\bs Q}^\R$ acts in the space $\bs W^\R(\bs z)$.

For $g\in \mc B_{\bs Q}^\R$, define the form $\langle\ ,\ \rangle_{\bs R g}$ on
$\bs W^\R(\bs z)$ by the formula
\be \langle v,w\rangle_{\bs R g}=\langle v,gw\rangle_{\bs R}=
\langle v,\bs R g w\rangle.
\ee
The form $\langle\ ,\ \rangle_{\bs R g}$ is a real bilinear symmetric form.

\begin{prop}\label{general cond}
Let $z_1,\dots,z_n$ be real numbers. Let $g\in \mc
B_{\bs Q}^\R$ be such that the form $\langle\ ,\ \rangle_{\bs R g}$ is positive
definite on $\bs W^\R(\bs z)$. Let\/ $V$ be a space of
quasi-exponentials with bases $\bs Q$ and
$\Wr^d(V)=\prod_{i=1}^n(x-z_i)\prod_{j=1}^N Q_j^x$. Then $V$ is
real.
\end{prop}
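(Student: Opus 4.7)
The strategy is to show that, whenever the hypothesis on $g$ is satisfied, the image of the Bethe algebra $\mc B_{\bs Q}^\R$ inside $\on{End}\bs W(\bs z)$ is diagonalizable with purely real joint spectrum on $\bs W^\R(\bs z)$; invoking Lemma~\ref{cite2} to identify $V$ with a space $V_v$ coming from an eigenvector $v$ of $\mc B_{\bs Q}$ then forces the coefficients of $\mc D_{\bs Q,v}$ to be real rational functions of $x$, and hence the quasi-exponential kernel $V$ to be a real space of quasi-exponentials.

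\emph{Step 1 (self-adjointness with respect to the weighted form).} I would first observe that for every $b\in\mc B_{\bs Q}^\R$ and all $v,w\in\bs W^\R(\bs z)$,
\begin{equation*}
\langle bv,w\rangle_{\bs R g}\ =\ \langle bv,gw\rangle_{\bs R}\ =\ \langle v,bgw\rangle_{\bs R}\ =\ \langle v,gbw\rangle_{\bs R}\ =\ \langle v,bw\rangle_{\bs R g},
\end{equation*}
using Lemma~\ref{cite1} and the commutativity of the Bethe algebra (which contains $g$). Thus every element of $\mc B_{\bs Q}^\R$ is self-adjoint with respect to the real symmetric form $\langle\ ,\ \rangle_{\bs R g}$. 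Since by hypothesis this form is positive definite on $\bs W^\R(\bs z)$, standard real spectral theory gives that $\mc B_{\bs Q}^\R$ acts on $\bs W^\R(\bs z)$ as a commuting family of symmetric operators, hence is simultaneously diagonalizable in a real orthogonal basis, and every element of $\mc B_{\bs Q}$ has real eigenvalues on the complexification $\bs W(\bs z)$.

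\emph{Step 2 (reality of the difference operator).} By Lemma~\ref{cite2}, for generic $\bs z$ (and generic $\bs Q$) there exists an eigenvector $v\in\bs W(\bs z)$ of $\mc B_{\bs Q}$ whose associated scalar difference operator $\mc D_{\bs Q,v}$ has quasi-exponential kernel $V\otimes\mc U$. The coefficients $B_{i,\bs Q,v}(x)$ of $\mc D_{\bs Q,v}$ are, as functions of $x$, rational combinations of joint eigenvalues of elements of $\mc B_{\bs Q}^\R$ on $v$; by Step~1, all such eigenvalues are real, so $\mc D_{\bs Q,v}$ is a scalar difference operator with real rational coefficients. Because $\bs Q\in(\R^\times)^N$, complex conjugation acts on the quasi-exponential kernel of $\mc D_{\bs Q,v}$, preserving the space $V$; therefore $V$ admits a basis of real quasi-exponentials, i.e.\ $V$ is real.

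\emph{Step 3 (removing the genericity assumption).} The above argument handles the generic case directly; for general $\bs z$ satisfying the hypothesis, I would combine the argument with the finiteness of the discrete Wronski map (Proposition~\ref{d finite}) and the reduction to generic $\bs Q$ provided by the preceding subsection. More precisely, one fixes a curve of data for which positivity of $\langle\ ,\ \rangle_{\bs R g}$ persists (an open condition), realizes $V$ as an algebraic limit of spaces $V_v$ arising from Bethe eigenvectors in the generic situation, and uses the polynomial dependence of $W(x,\bs Q)$ to conclude reality of the limiting space.

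\emph{Main obstacle.} The conceptual core of the proof---Step~1 together with the application of Lemma~\ref{cite2}---is formal once the two lemmas are granted. The genuine difficulty, which must be handled carefully, is the transition from the generic situation in which every space of quasi-exponentials with the prescribed discrete Wronskian is known to come from an eigenvector of $\mc B_{\bs Q}$ (Lemma~\ref{cite2}), to the general case stated in the proposition; this is where one needs the algebraic finiteness given by Proposition~\ref{d finite} and a careful continuity argument ensuring that the positive definiteness of $\langle\ ,\ \rangle_{\bs R g}$ is preserved along the specialization.
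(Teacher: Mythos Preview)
Your proposal is correct and follows essentially the same route as the paper: use Lemma~\ref{cite1} together with commutativity of $\mc B_{\bs Q}$ to see that every element of $\mc B_{\bs Q}^\R$ is symmetric for $\langle\ ,\ \rangle_{\bs R g}$, conclude that the eigenvalues (hence the coefficients of $\mc D_{\bs Q,v}$) are real, apply Lemma~\ref{cite2} in the generic situation, and then pass to the general case by openness of positive definiteness together with the finiteness of the discrete Wronski map. The paper compresses your Step~3 into the single sentence ``since the condition of a form being positive definite is open, we can assume that $\bs z,\bs Q$ are generic,'' and obtains reality of $V$ from reality of $\mc D_{\bs Q,v}$ by writing $\mc D_{\bs Q,v}\,p(x)Q^x=0$ as a real linear system on the coefficients of $p$, which is equivalent to your complex-conjugation argument.
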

\begin{proof}
Since the condition of a form being positive definite is open,
we can assume that $\bs z$, $\bs Q$ are generic.
Then by Lemma \ref{cite2}, there exists a vector $v\in\bs W(\bs z)$,
such that $v$ is an eigenvector of the Bethe algebra $\mc B_{\bs Q}$, and
$V\otimes \mc U$
is the quasi-exponential kernel of the operator $\mc D_{\bs Q,v}$.
By Lemma \ref{cite1}, the coefficients of
the operator $\mc D_{\bs Q}$ are rational functions in $x$ which are
symmetric operators with respect to the form $\langle\ ,\ \rangle_{\bs R g}$.
Since this form is positive definite on $\bs W^\R(\bs z)$, the coefficients of
the operator $\mc D_{\bs Q,v}$ are rational functions with real coefficients.

Let $Q$ be a real number. Consider the equation $\mc D_{\bs
Q,v}p(x)Q^x=0$ as a system of equations for the coefficients of the
polynomial $p(x)=\sum_{i=0}^na_{n-i}x^i$. This is a system of linear
equations with real coefficients. Therefore, the space of solutions
has a real basis. The proposition follows.
\end{proof}

In Example \ref{example}, the converse to Proposition \ref{general
cond} is also true. Namely, let $Q_1,\dots,Q_N$ be real. Let
$z_1,\dots,z_n$ be real numbers such that every space of
quasi-exponentials $V$ with bases $\bs Q$ and
$\Wr^d(V)=\prod_{i=1}^n(x-z_i)\prod_{j=1}^N Q_j^x$, is real. Then
there exists $g\in \mc B_{\bs Q}$ such that the form $\langle\ ,\
\rangle_{\bs R g}$ is positive definite on $\bs W^\R(\bs z)$. However,
the existence of such $g\in \mc B_{\bs Q}$ is usually difficult to
check.

\medskip

We deduce
Theorem \ref{theorem 1} from Proposition \ref{general cond}.

\begin{lem}\label{positive}
Assume that $z_i-z_j>1$ if $i>j$.
Then the restriction of the Yangian form to $\bs W^{\R}(\bs z)$
is a positive definite bilinear form.
\end{lem}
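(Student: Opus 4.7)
The plan is to establish positive definiteness of $\langle\,\cdot\,,\,\cdot\,\rangle_{\bs R}$ by combining a self-adjointness argument with a continuity/deformation argument within the region of interest.

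\emph{Step 1 (Self-adjointness).} Since the flip $P$ satisfies $\langle P u, v\rangle = \langle u, P v\rangle$ on $W \otimes W$, each rational $R$-matrix $R(u) = uI + P$ is self-adjoint with respect to the tensor Shapovalov form for real $u$. The Yang-Baxter equation
\[
R^{(12)}(a-b)\,R^{(13)}(a)\,R^{(23)}(b) \;=\; R^{(23)}(b)\,R^{(13)}(a)\,R^{(12)}(a-b)
\]
combined with induction on $n$ then lets one rewrite the reverse-ordered product $\bs R^{T}$ as $\bs R$ itself. Hence $\langle\,\cdot\,,\,\cdot\,\rangle_{\bs R}$ is a genuine symmetric bilinear form on $\bs W^{\R}(\bs z)$, and has a well-defined signature.

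\emph{Step 2 (Non-degeneracy and connectedness).} The eigenvalues of $R(u)$ are $u+1$ on the symmetric subspace of $W\otimes W$ and $u-1$ on the antisymmetric subspace, so $R(u)$ is invertible unless $u=\pm 1$. In particular, $\bs R$ is non-degenerate on the open set
\[
\mc R \;:=\; \{\bs z \in \R^n \,:\, z_j - z_i > 1 \text{ for all } j > i\},
\]
which is a convex polyhedron and hence path-connected. Therefore the signature of the form is constant on $\mc R$.

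\emph{Step 3 (Asymptotic evaluation).} I would then deform $\bs z$ linearly within $\mc R$ to a configuration where every gap $z_j - z_i$ is arbitrarily large, and analyze the dominant behavior of $\bs R$ there. In this limit each factor $R(z_i - z_j)$ is asymptotically scalar, and the Yang-Baxter reorderings can be used to regroup the product so that the leading operator is a positive scalar multiple of the identity on $\bs W^{\R}(\bs z)$. Combined with Step 2, this forces the form to be positive definite throughout $\mc R$.

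\emph{Main obstacle.} The delicate part of the argument is Step 3. Under the hypothesis, individual factors $R(z_i - z_j)$ can have eigenvalues of either sign, so the sign of the leading term of $\bs R$ cannot be read off from the naive product $\prod_{i<j}(z_i-z_j)$. The key is to use the Yang-Baxter equation to regroup the product in an arrangement whose leading term is manifestly a positive multiple of the identity. Carrying out this sign-bookkeeping uniformly in $n$ is the principal technical content of the proof.
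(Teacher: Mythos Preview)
Your overall strategy—symmetry of the form, nondegeneracy on a connected region, and continuity from an asymptotic regime where definiteness is evident—is exactly the paper's argument; Steps~1 and~2 are correct (the paper does not even spell out Step~1). The paper's Step~3 is much simpler than you anticipate: it takes the limit $z_1\gg z_2\gg\dots\gg z_n$, in which every argument $z_i-z_j$ with $i<j$ appearing in $\bs R$ is large and positive, so $\bs R=\bigl(\prod_{i<j}(z_i-z_j)\bigr)\,I+(\text{lower order})$ with a \emph{positive} leading scalar; after dividing by that scalar the Yangian form tends to the tensor Shapovalov form, which is positive definite. No sign bookkeeping and no Yang--Baxter manipulations are needed.

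The obstacle you identify is, however, genuine for the inequality as literally printed ($z_i-z_j>1$ for $i>j$): in that region the leading scalar $\prod_{i<j}(z_i-z_j)$ has sign $(-1)^{\binom n2}$, and already for $n=2$ the form is visibly negative definite there. Your proposed cure via Yang--Baxter reorderings cannot work in principle, since YBE rewrites $\bs R$ as a different product of the same factors but leaves the operator itself—and hence the sign of its leading term—unchanged. What you have really detected is the mismatch between the inequality direction in the lemma's hypothesis and the limit direction used in the paper's own proof; since in the application the $z_i$ are the unordered roots of a Wronskian, one simply relabels so that $z_1>z_2>\dots>z_n$, and then the asymptotic step is immediate with no sign issue at all.
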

\begin{proof}
The restriction of the tensor
Shapovalov form to $(W^{\R})^{\otimes n}$
is a positive definite bilinear form. In the limit
$z_1\gg z_2\gg \dots \gg z_n$, the Yangian form on $\bs W(\bs z)$
tends to the tensor
Shapovalov form. Moreover, the Yangian form is nondegenerate if
$z_i-z_j>1$ for all $i>j$. The lemma follows, since the
dependence of the Yangian form on $\bs z$ is continuous.
\end{proof}

The first part of Theorem \ref{theorem 1} with the additional
condition $z_i-z_j\neq 1$ for all $i,j$ follows from Lemma
\ref{positive} and Proposition \ref{general cond} with $g=1$. Then the
condition that $z_i-z_j\neq 1$ for all $i,j$ can be dropped by the
continuity with respect to $z_i$, see Proposition \ref{d finite}.

\medskip

Assume that $Q_1,\dots,Q_N$ are all positive.
Assume there exists $0\leq s\leq n$
such that $z_i-z_j>1$ if
either $s\geq i>j\geq 1$ or $n\geq i>j>s$.
Consider $G_s=(K_1K_2\dots K_s)^{-1}$, where
$K_i$ are given by \Ref{qkz op}.

If $\bs z$ is generic, then there exists an element
$g_s\in\mc B_{\bs Q}$ which acts on $\bs W(\bs z)$ by $G_s$. Indeed,
$K_i\in\on{End}(\bs W(\bs z))$ are in the image of the Bethe algebra
by Lemma \ref{qKZ} and the inverse of a nondegenerate operator in
a finite-dimensional space can be written as a polynomial of the operator
itself.

\begin{lem}\label{positive2} Assume that $Q_1,\dots,Q_N$ are all positive.
Assume there exists $0\leq s\leq n$
such that $z_i-z_j>1$ if
either $s\geq i>j\geq 1$ or $n\geq i>j>s$. Then
the form $\langle\ ,\ \rangle_{\bs R G_s}$ is a
positive definite bilinear form on $\bs W^\R(\bs z)$.
\end{lem}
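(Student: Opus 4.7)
The plan is to decompose $\bs R\cdot G_s$ as a product of three pairwise commuting operators, each positive definite under the hypotheses. By induction on $s$, using the Yang--Baxter equation together with the unitarity relation $R(x)R(-x)=(1-x^2)\,I$, I would establish an identity of the form
\begin{equation*}
\bs R\cdot(K_1 K_2 \cdots K_s)^{-1} \;=\; \Phi_1\cdot\Phi_2\cdot (Q^{(1)}Q^{(2)}\cdots Q^{(s)})^{-1},
\end{equation*}
where $\Phi_1$ is built out of R-matrices $R^{(i,j)}(\,\cdot\,)$ with $1\leq i<j\leq s$ (first-block pairs only), $\Phi_2$ is built out of R-matrices with $s<i<j\leq n$ (second-block pairs only), and all cross-block R-matrices -- those with $i\leq s<j$ -- cancel between $\bs R$ and $K_1\cdots K_s$. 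The three factors on the right-hand side pairwise commute: $\Phi_1$ and $\Phi_2$ act on disjoint tensor factors, and $Q^{(1)}\cdots Q^{(s)}$ commutes with each $R^{(i,j)}(x)$ appearing in $\Phi_1$ or $\Phi_2$ because $R^{(i,j)}(x)$ commutes with $Q^{(i)}Q^{(j)}$ by the $GL_N$-invariance of the R-matrix. This identity is easy to check for $s=1$ (where the R-matrices involving site $1$ in $\bs R$ cancel outright with those in $K_1^{-1}$, leaving $\bs R G_1 = \bs R_{\{s+1,\ldots,n\}}(Q^{(1)})^{-1}$) and for $s=2$ (one additional application of unitarity is needed); the inductive step follows the same pattern.

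Given the factorization, positivity follows factor by factor. The operator $\Phi_2$ is, up to a nonzero scalar in $\bs z$, the Yangian form for the second block of sites, and Lemma~\ref{positive} applied to $W^\R(z_{s+1})\otimes\cdots\otimes W^\R(z_n)$ under the hypothesis $z_i-z_j>1$ for $n\geq i>j>s$ shows that $\Phi_2$ is positive definite on the second-block factors (hence on all of $\bs W^\R(\bs z)$, acting trivially on the first block). Similarly, $\Phi_1$ is a scalar multiple of the Yangian form for the first block -- with the sign of the scalar arranged so that positive definiteness holds under the hypothesis $z_i-z_j>1$ for $s\geq i>j\geq 1$ -- and is positive definite by Lemma~\ref{positive} applied to the first block. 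Finally, $(Q^{(1)}\cdots Q^{(s)})^{-1}$ is positive definite because each $Q_i$ is positive. Being a product of pairwise commuting positive definite operators, $\bs R G_s$ is positive definite on $\bs W^\R(\bs z)$, which proves the lemma.

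The main obstacle is the algebraic identity in the first step: tracking the precise order of R-matrices after repeated YBE manipulations, and the scalar factors produced by unitarity relations among first-block R-matrices, requires careful bookkeeping. A possible alternative, directly modelled on the proof of Lemma~\ref{positive}, is a continuity argument: consider the deformation $z_i(\tau)=z_i+\tau$ for $i\leq s$ and $z_i(\tau)=z_i$ for $i>s$, and send $\tau\to+\infty$; then the cross-block R-matrices in $\bs R$ and in each $K_i$ ($i\leq s$) diverge linearly in $\tau$, but their contributions cancel in the ratio $\bs R(K_1\cdots K_s)^{-1}$, which therefore remains bounded and depends continuously on $\tau$. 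The within-block gap conditions guarantee nondegeneracy of the form throughout the deformation (since within-block R-matrices are unaffected by the shift), and in the limit the form tends to the product of the within-block Yangian forms and $(Q^{(1)}\cdots Q^{(s)})^{-1}$, each positive definite; positivity at $\tau=0$ then follows by continuity and nondegeneracy along the path.
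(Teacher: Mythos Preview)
Your proposal is correct and follows essentially the paper's approach. The paper asserts precisely your factorization $\bs R G_s=\Phi_1\,\Phi_2\,(Q^{(1)}\cdots Q^{(s)})^{-1}$ (with $\Phi_1,\Phi_2$ the $\bs R$-operators of the two blocks) and then concludes by the continuity/nondegeneracy argument you describe as your alternative; your primary route, applying Lemma~\ref{positive} blockwise and using that a product of commuting positive-definite symmetric operators is positive definite, is an equally valid minor variant.
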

\begin{proof}
We have
\bea
\bs R G_s&=&
(R^{(s-1,s)}\dots R^{(2,s)} \dots
R^{(2,3)}R^{(1s)}\dots R^{(1,3)} R^{(1,2)})\times \\
&&(R^{(n-1,n)}\dots R^{(s+2,n)} \dots
R^{(s+2,s+3)}R^{(s+1,n)}\dots R^{(s+1,s+3)} R^{(s+1,s+2)})
(Q^{(1)}\dots Q^{(s)})^{-1},
\eea
where $R^{(i,j)}=R^{(i,j)}(z_i-z_j)$.
In the limit $z_1\gg z_2\gg \dots \gg z_n$,
the form $\langle\ ,\ \rangle_{\bs R G_s}$
tends to the positive definite form $\langle\ ,\ \rangle_{s}$ given by
\be
\langle v, w\rangle_{s}=\langle v, (Q^{(1)}\dots Q^{(s)})^{-1}W\rangle.
\ee
Moreover, the form $\langle\ ,\ \rangle_{\bs R G_s}$
is clearly nondegenerate if
$z_i-z_j>1$ for all $i>j$ such that either $i\leq s$ or $j>s$.
The lemma follows since the
dependence of the form $\langle\ ,\ \rangle_{\bs R G_s}$
on $\bs z$ is continuous.
\end{proof}

The second part of
Theorem \ref{theorem 1} with positive $Q_1,\dots,Q_N$, and the additional
condition that $z_i-z_j\neq 1$ for all $i,j$,
follows from Lemma \ref{positive2} and
Proposition \ref{general cond} with $g=G_s$. Then the condition
that $z_i-z_j\neq 1$ for all $i,j$ can be dropped by the continuity
with respect to $z_i$, see Proposition \ref{d finite}.

\section{Spaces of quasi-exponentials and the differential Wronski map}
\label{smooth sec}

\subsection{Formulation of statement}
\label{smooth statement section}
A function of the form
$p(x)e^{\la x}$, where $\la\in\C$
and $p(x)\in\C[x]$, is called {\it a quasi-exponential function with
exponent $\la$}.

Fix a natural number $N\geq 2$. Let $\bs \la=(\la_1,\dots,\la_N)\in\C^N$.
We call a complex vector space of dimension $N$ spanned by
$N$ quasi-exponential functions $p_i(x)e^{\la_i x}$, $i=1,\dots,N$,
a {\it space of quasi-exponentials with exponents $\bs \la$}.

A quasi-exponential function $p(x)e^{\la x}$
is called {\it real} if $\la\in\R$ and $p(x)\in\R[x]$.
The space of quasi-exponentials $V$ is called {\it real} if it has a basis
consisting of real quasi-exponential functions.

{\it The Wronskian} of functions $f_1(x),\dots,f_N(x)$
is the determinant
\bean\label{wr}
\Wr (f_1,\dots, f_N)\ =\ \det
\left( \begin{array} {cccccc}
f_1 & f_1' & \dots & f_1^{(N-1)}
\\
f_2 & f_2' &{} \dots & f_2^{(N-1)}
\\
{}\dots & {} \dots & \dots & \dots
\\
f_N & f_N' &{} \dots & f_N^{(N-1)}
\end{array} \right).
\eean
The Wronskians of two bases for a vector space of functions differ by
multiplication by a nonzero number.

Let $V$ be a space of quasi-exponentials with exponents $\bs \la$.
The Wronskian of any basis for $V$ is a quasi-exponential of the form
$w(x)e^{\sum_{i=1}^N\la_i x}$, where $w(x)\in\C[x]$.
The unique representative with a monic
polynomial $w(x)$ is called {\it the Wronskian of\/} $V$ and is denoted
by $\Wr(V)$.

\begin{theorem}\label{theorem 2}
Let $V$ be a space of quasi-exponentials with real exponents
$\bs\la\in\R^N$. If
zeroes of the Wronskian $\Wr(V)$ are real, then the space $V$ is real.
\end{theorem}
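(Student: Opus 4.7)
The strategy is to deduce Theorem~\ref{theorem 2} from Theorem~\ref{theorem 1} via a scaling limit. Let $V$ be as in the statement, with $\Wr(V)=\prod_{i=1}^n(x-z_i)\prod_{j=1}^Ne^{\la_jx}$ and all $z_i$ real. I would first handle the case where the $z_i$ are pairwise distinct. For each $\epsilon>0$, consider the discrete Wronski map with real positive bases $\bs Q_\epsilon=(e^{\epsilon\la_1},\dots,e^{\epsilon\la_N})$, and let the target be
\be
\prod_{i=1}^n\Bigl(y-\frac{z_i}{\epsilon}\Bigr)\,\prod_{j=1}^N(e^{\epsilon\la_j})^{\,y}.
\ee
Its roots $z_i/\epsilon$ are real and, for $\epsilon$ small enough, separated by at least $1$, since $|z_i-z_j|/\epsilon\to\infty$. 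Theorem~\ref{theorem 1}(1) will therefore guarantee that every preimage $V^{(\epsilon)}$ of this target under the discrete Wronski map is a real space of quasi-exponentials with bases $\bs Q_\epsilon$.

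The next step will be to transform each $V^{(\epsilon)}$ back into the continuous setting by the substitution $y=x/\epsilon$, forming $\tilde V^{(\epsilon)}=\{g(x/\epsilon):g\in V^{(\epsilon)}\}$, a space of quasi-exponentials with exponents $\bs\la$. Since $\epsilon\in\R$, the substitution sends real polynomial coefficients to real polynomial coefficients, so $\tilde V^{(\epsilon)}$ is also real. A standard Taylor/Vandermonde expansion
\be
\det\bigl(f_i(x+\epsilon(j-1))\bigr)_{i,j=1}^N=\epsilon^{N(N-1)/2}\Wr(f_1,\dots,f_N)(x)+O(\epsilon^{N(N-1)/2+1}),
\ee
uniform on compact subsets of $\C$, relates the discrete Wronskian of $V^{(\epsilon)}$ evaluated at $y=x/\epsilon$ to the continuous Wronskian of $\tilde V^{(\epsilon)}$. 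After normalizing both to be monic, one finds $\Wr(\tilde V^{(\epsilon)})\to\Wr(V)$ as $\epsilon\to0$.

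To close the argument, I would invoke the finiteness of the differential Wronski map on spaces of quasi-exponentials with fixed exponents and degrees (the proof of Proposition~\ref{d finite} adapts verbatim, or one may appeal to \cite{MTV1}). The family of finite Wronski maps indexed by $\epsilon\ge 0$ has constant generic degree, and for generic distinct $\bs z$ the continuous fiber over $\Wr(V)$ is reduced; hence every $V$ in this fiber arises as the limit of a sequence of real spaces $\tilde V^{(\epsilon)}$, and since the real locus in the Grassmannian is closed, $V$ is real. The remaining case of real but non-distinct $z_i$ will follow by a second perturbation argument: approximate $\bs z$ by tuples of distinct real numbers and pass to the limit, once again using the finiteness of the continuous Wronski map.

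The main technical obstacle is the degree-preservation step, i.e., verifying that every continuous preimage of $\Wr(V)$ actually arises as the limit of real discrete preimages $\tilde V^{(\epsilon)}$. Once this limit matching is in place, the closedness of the real locus finishes the proof.
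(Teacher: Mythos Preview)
Your approach is essentially the one the paper takes: rescale to turn the step-$1$ discrete Wronskian with bases $e^{\epsilon\la_j}$ into the step-$\epsilon$ discrete Wronskian with exponents $\la_j$ (the paper's Corollary~\ref{cor theorem 1}), use the Taylor expansion $\Wr^d_\epsilon\to\Wr$ to assemble a continuous family of finite Wronski maps degenerating to the differential one (the paper's Lemma~\ref{smooth}), and then invoke finiteness (Proposition~\ref{finite}) to pass to the limit and to remove the distinctness hypothesis on the $z_i$. The paper's presentation is slightly cleaner in one respect: rather than computing $\Wr(\tilde V^{(\epsilon)})$ and arguing it converges to $\Wr(V)$ (which is delicate because your functions vary with $\epsilon$), it observes directly that $\Wr^d_\epsilon(\tilde V^{(\epsilon)})=\Wr(V)$ \emph{exactly}, so $\tilde V^{(\epsilon)}$ lies in the fiber of the continuous family $\bar{\Wr}^d_{\bs\la}$ over the fixed point $(\epsilon,\Wr(V))$, and the lifting of $V$ to a family $V_\epsilon\to V$ then follows from continuity and properness of that family --- exactly the ``degree-preservation'' obstacle you flag.
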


Theorem \ref{theorem 2} is proved in Section \ref{smooth proof sec}.

Theorem \ref{theorem 2} in the case $\la_1=\la_2=\dots=\la_N=0$ is
the B.~and M.\,Shapiro conjecture proved in \cite{EG} for $N=2$ and
in \cite{MTV1} for all $N$.

\subsection{Proof of Theorem \ref{theorem 2}}\label{smooth proof sec}
Theorem \ref{theorem 2} can be proved similarly to Theorem
\ref{theorem 1}. However, it is not difficult to deduce Theorem
\ref{theorem 2} from Theorem \ref{theorem 1}; we do that in this section.

Let $\bs \la=(\la_1,\dots,\la_1,\dots,\la_k,\dots,\la_k)$,
where $\la_i$ is repeated $n_i$ times.
Consider the {\it Wronski map}:
\be
\Wr_{\bs\la}:\ Gr(n_1,l)\times Gr(n_2,l)
\times \dots \times Gr(n_k,l)\to Gr(1,n)
\ee
which maps
$V_1\times\dots\times V_k$ to
\be
\langle \Wr\Big(p_{1,1}(x)e^{\la_1 x}, \dots,
p_{1,n_1}(x)e^{\la_1 x},\dots , p_{k,1}(x)
e^{\la_k x},\dots,p_{k,n_k}(x)e^{\la_k x}
\Big)\prod_{i=1}^k e^{- n_i\la_ix}\rangle,
\ee
where $n$ is given by \Ref{n}, and
we used the notation of Section \ref{d wr map sec} for the bases for $V_i$.

\begin{prop}\label{finite} The Wronski map is a finite algebraic map.
\end{prop}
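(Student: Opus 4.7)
The plan is to mirror the proof of Proposition \ref{d finite} essentially verbatim, substituting the differential Wronskian for the discrete one. First I would note that both the source $Gr(n_1,l)\times\dots\times Gr(n_k,l)$ and the target $Gr(1,n)$ are irreducible projective complex varieties of the same dimension $n$, and that $\Wr_{\bs\la}$ is manifestly a well-defined algebraic morphism. By standard facts about morphisms of projective varieties of equal dimension, it therefore suffices to prove that every point of $Gr(1,n)$ has a finite preimage.

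Fixing a monic polynomial $w(x)\in\C_n[x]$, I would parametrize the preimage of $\C w(x)$ by the standard bases $p_{i,j}(x) = x^{d_{i,j}}+\sum_s a_{i,j,s}x^{d_{i,j}-s}$ of the subspaces $V_i$. A direct computation using $(e^{\la x}p(x))^{(m)} = e^{\la x}(\partial+\la)^m p(x)$ and row operations yields an identity of the shape
\begin{equation*}
\Wr(p_{1,1}(x)e^{\la_1 x},\dots,p_{k,n_k}(x)e^{\la_k x}) = C(\bs\la,\bs d)\, w(x)\,\prod_{i=1}^k e^{n_i\la_i x},
\end{equation*}
where $C(\bs\la,\bs d)$ is an explicit nonzero constant (a product of a Vandermonde-type factor in the $\la_i$'s and factorials determined by the degree pattern $\bs d=\{d_{i,j}\}$). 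This becomes a system of algebraic equations in the unknowns $a_{i,j,s}$, with the number of equations equal to the number of unknowns for each fixed choice of degrees $\bs d$.

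Since there are only finitely many degree patterns $\bs d$ with $d_{i,j}<l$, it is enough to bound the number of solutions for each $\bs d$. Here I would run the exact same contradiction used in the proof of Proposition \ref{d finite}: assume there is a one-parameter family $a_{i,j,s}^t$ of solutions with some coefficients blowing up as $t\to\infty$, then inductively construct $\tilde p_{i,j}^t(x) = p_{i,j}^t(x)+\sum_{r<j}\al_{i,j,r}^t p_{i,r}^t(x)$ so that the leading (in $t$) terms of $\tilde p_{i,1}^t,\dots,\tilde p_{i,n_i}^t$ are linearly independent for each $i$. The operation of replacing $p_{i,j}^t$ by $\tilde p_{i,j}^t$ is triangular within each block $V_i$ and hence leaves the Wronskian unchanged; yet, by construction, at least one coefficient of $\Wr(\tilde p_{1,1}^t(x)e^{\la_1 x},\dots)$ grows without bound in $t$, contradicting that it equals $C(\bs\la,\bs d)\,w(x)\,\prod e^{n_i\la_i x}$, which is $t$-independent.

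I do not expect any serious obstacle: the only step requiring genuine attention is the verification that the block-triangular modification $p_{i,j}^t\mapsto\tilde p_{i,j}^t$ preserves the differential Wronskian (it does, by multilinearity and antisymmetry), and that the leading-term analysis of the Wronski matrix goes through as in the discrete case. The differential setting is actually slightly cleaner than the discrete one, since $(e^{\la x}p(x))^{(m)}$ factors exactly as $e^{\la x}(\partial+\la)^m p(x)$, so the exponential prefactors split off transparently.
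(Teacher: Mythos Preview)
Your proposal is correct and matches the paper's own treatment: the paper proves Proposition~\ref{finite} by simply stating that ``the proof is similar to the proof of Proposition~\ref{d finite},'' and your outline supplies exactly that parallel argument. The only difference is that you spell out the details the paper omits, which is fine.
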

\begin{proof}
The proof of Proposition \ref{finite} is similar to
the proof of Proposition \ref{d finite}.
\end{proof}

For $h\in\C^*$,
{\it the discrete Wronskian with step $h$}
of functions $f_1(x),\dots,f_N(x)$ is the determinant
\bean
\label{dwrh}
\Wr^d_h(f_1,\dots, f_N)\ =\ \det
\left( \begin{array} {cccccc}
f_1(x) & f_1(x+h) & \dots & f_1(x+h(N-1))
\\
f_2 (x) & f_2(x+h) &{} \dots & f_2(x+h(N-1))
\\
{}\dots & {} \dots & \dots & \dots
\\
f_N(x) & f_N(x+h) &{} \dots & f_N(x+h(N-1))
\end{array} \right).
\eean

The discrete Wronskians with step $h$
of any two bases for a vector space of functions
differ by multiplication by a nonzero number.
Let $V$ be a space of quasi-exponentials with exponents $\bs\la$.
The discrete Wronskian with step $h$
of any basis for $V$ is a quasi-exponential of the form
$w(x)\prod_{j=1}^Ne^{\la_jx}$, where $w(x)\in\C[x]$.
The unique representative with a monic polynomial
$w(x)$ is called {\it the discrete Wronskian of $V$ with step $h$}
and is denoted by $\Wr^d_h(V)$.

Theorem \ref{theorem 1} implies the following statement.

\begin{cor}\label{cor theorem 1} Let $h$ be real.
Let $V$ be a space of quasi-exponentials
with real exponents $\bs \la\in(\R^\times)^N$, and let
$\Wr^d_h(V)=\prod_{i=1}^n(x-z_i)\prod_{j=1}^Ne^{\la_jx}$.
Assume that $z_1,\dots,z_n$ are real and $|z_i-z_j|\geq |h|$ for all $i\neq j$.
Then the space $V$ is real.
\end{cor}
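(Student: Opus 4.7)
The plan is to reduce Corollary \ref{cor theorem 1} to Theorem \ref{theorem 1} by the change of variable $x \mapsto hy$. I would introduce the linear isomorphism $\phi$ of function spaces defined by $(\phi f)(y) = f(hy)$, and set $\tilde V = \phi(V)$. Under $\phi$, a quasi-exponential $p(x)e^{\la x}$ becomes $p(hy)\cdot Q^y$ with $Q := e^{\la h}$. Since $\la$ and $h$ are real, $Q$ is a positive real number, so $\tilde V$ is an $N$-dimensional space of quasi-exponentials with real bases $\tilde{\bs Q} = (e^{\la_1 h}, \dots, e^{\la_N h})$, to which Theorem \ref{theorem 1} is directly applicable.

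The key steps I would then carry out are as follows. First, directly from the definitions one has the identity
\[
\Wr^d_1(\phi f_1, \dots, \phi f_N)(y) \;=\; \Wr^d_h(f_1, \dots, f_N)(hy),
\]
since $(\phi f)(y + j) = f(hy + hj)$. Applied to a basis of $V$ and passed to the monic normalization, this converts the assumption $\Wr^d_h(V) = \prod_{i=1}^n (x - z_i)\prod_{j=1}^N e^{\la_j x}$ into
\[
\Wr^d_1(\tilde V)(y) \;=\; \prod_{i=1}^n (y - z_i/h)\prod_{j=1}^N (e^{\la_j h})^y,
\]
the scalar $h^n$ arising from $\prod_i (hy - z_i)$ being absorbed by the monic normalization. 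The hypothesis $|z_i - z_j| \geq |h|$ becomes $|z_i/h - z_j/h| \geq 1$ for $i \neq j$, which is exactly the hypothesis of part~(1) of Theorem \ref{theorem 1}. That theorem then produces a basis of $\tilde V$ of the form $\tilde p_i(y)(e^{\la_i h})^y$ with $\tilde p_i \in \R[y]$. Pulling back by $\phi^{-1}$ yields a basis $\tilde p_i(x/h)\,e^{\la_i x}$ of $V$, and since $h \in \R$ the polynomials $p_i(x) := \tilde p_i(x/h)$ have real coefficients, so $V$ is real.

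There is essentially no substantive obstacle here beyond bookkeeping; the whole argument is a direct substitution once the rescaling is identified. The only points that warrant a moment of care are the monic normalization of $\Wr^d_1(\tilde V)$, which is what absorbs the scalar $h^n$, and the identification of $Q = e^{\la h}$ (rather than $e^\la$) as the relevant quasi-exponential base after the change of variable.
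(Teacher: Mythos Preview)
Your proposal is correct and follows essentially the same approach as the paper: both reduce to Theorem~\ref{theorem 1} via the rescaling $x\mapsto hy$, which turns exponents $\la_i$ into bases $e^{h\la_i}$, shifts the Wronskian roots to $z_i/h$, and converts $|z_i-z_j|\geq|h|$ into $|z_i/h-z_j/h|\geq 1$. Your write-up is simply more explicit about the Wronskian identity and the pull-back of the real basis, but the argument is the same.
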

\begin{proof}
Let $V$ be a space of quasi-exponentials
with real exponents $\bs \la\in(\R^\times)^N$, and let
$\Wr^d_h(V)=\prod_{i=1}^n(x-z_i)\prod_{j=1}^Ne^{\la_jx}$. Then
\be
\bar V=\{f(xh)\ |\ f(x)\in V\}
\ee
is a space of quasi-exponentials with real bases
$(e^{h\la_1},\dots,e^{h\la_N})$, and
\be
\Wr^d(\bar V)=\prod_{i=1}^n(x-z_i/h)\prod_{j=1}^N e^{h\la_jx}\;.
\ee
Therefore the corollary follows from Theorem \ref{theorem 1}.
\end{proof}

Define the {\it discrete Wronski map with step $h$}:
\be
\Wr^d_{\bs \la,h}:\ Gr(n_1,l)\times Gr(n_2,l)
\times \dots \times Gr(n_k,l)\to Gr(1,n)
\ee
which maps $V_1\times\dots\times V_k$ to
\be
\langle \Wr^d_h\Big(p_{1,1}e^{\la_1x}, \dots, p_{1,n_1}(x)
e^{\la_1x},\dots, p_{k,1}(x)e^{\la_kx},\dots, p_{k,n_k}(x)e^{\la_kx}\Big)
\prod_{i=1}^ke^{-n_i\la_ix}\rangle,
\ee
where we used the notation of Section \ref{d wr map sec}
for the bases for $V_i$.

Let
\be
\bar {\Wr}^d_{\bs\la}:\ \C\times Gr(n_1,l)\times Gr(n_2,l)
\times \dots \times Gr(n_k,l)\to \C\times Gr(1,n)
\ee
be the map which equals
\bea
id\times \Wr^d_{{\bs\la},h}\quad
&\on{on}&\quad \{h\}\times Gr(n_1,l)\times \dots \times Gr(n_k,l),\quad
h\in\C^\times,\\
id\times \Wr_{\bs\la} \hspace{17pt} &\on{on}& \quad
\{0\}\times Gr(n_1,l)\times \dots \times Gr(n_k,l).
\eea
\begin{lem}\label{smooth}
The map $\bar{\Wr}^d_{\bs\la}$ is a continuous map of smooth varieties.
\end{lem}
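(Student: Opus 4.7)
The plan is as follows. Smoothness of the source $\C\times Gr(n_1,l)\times\dots\times Gr(n_k,l)$ and the target $\C\times Gr(1,n)$ is standard: each Grassmannian is a smooth projective variety and products of smooth varieties are smooth. Away from $h=0$, continuity of $\bar{\Wr}^d_{\bs\la}$ is immediate, because in affine charts of the Grassmannian factors, the discrete Wronski map $\Wr^d_{\bs\la,h}$ is given by polynomial expressions in the standard-basis coefficients of each $V_i$ and in $h$, just as in Proposition~\ref{d finite}. Thus the real content of the lemma is continuity at $h=0$.

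To establish this, I will use the following identity. For any smooth functions $f_1,\dots,f_N$ and $h\neq 0$, writing $\delta_h f(x)=(f(x+h)-f(x))/h$ for the first divided difference and iterating, the standard substitution
\[
f_i(x+(s-1)h)\,=\,\sum_{j=0}^{s-1}\binom{s-1}{j}h^j\delta_h^j f_i(x)
\]
expresses the columns of the matrix in \Ref{dwrh} as an upper-triangular combination of the columns of $\bigl(\delta_h^{s-1}f_i(x)\bigr)_{i,s}$ with diagonal entries $h^{s-1}$. Taking determinants yields the exact identity
\[
\Wr^d_h(f_1,\dots,f_N) \,=\, h^{N(N-1)/2}\,\det\bigl(\delta_h^{s-1}f_i(x)\bigr)_{i,s=1}^N.
\]
Since $\delta_h^{s-1}f \to f^{(s-1)}$ uniformly on compact sets as $h\to 0$ for any smooth $f$, the determinant on the right converges to the ordinary Wronskian $\Wr(f_1,\dots,f_N)$. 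Applying this with $f_{i,j}=p_{i,j}(x)e^{\la_i x}$ and factoring $\prod_i e^{n_i\la_i x}$ out of the rows shows that the polynomial $\Wr^d_h(\dots)\prod_i e^{-n_i\la_i x}$, divided by $h^{N(N-1)/2}$, depends continuously on $(h,V_1\times\dots\times V_k)$ and has a nonzero limit at $h=0$ equal to $\Wr(\dots)\prod_i e^{-n_i\la_i x}$; nonvanishing of this limit follows from the linear independence of the quasi-exponentials $p_{i,j}(x)e^{\la_i x}$.

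Since $Gr(1,n)$ is projective, for $h\neq 0$ the value of $\bar{\Wr}^d_{\bs\la}$ coincides with the projective class of this renormalized polynomial, which then extends continuously to the projective class at $h=0$ prescribed by $\Wr_{\bs\la}$. The only delicate point is to verify this continuous extension inside a local affine chart of $Gr(1,n)$ centered at the nonzero limit polynomial, but since a neighborhood of a nonzero vector in $\C_n[x]$ is a standard affine chart of $Gr(1,n)$, this is routine. The main obstacle is therefore the algebraic identity for $\Wr^d_h$ displayed above, which is precisely the ingredient that allows the passage $h\to 0$ to be performed cleanly in the projective codomain.
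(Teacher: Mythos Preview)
Your argument is correct and follows essentially the same route as the paper: column operations convert the discrete Wronskian matrix to the matrix of iterated divided differences, yielding the exact factorization $\Wr^d_h=h^{N(N-1)/2}\det(\delta_h^{s-1}f_i)$, whose limit as $h\to0$ is the ordinary Wronskian. You supply more detail than the paper (the explicit triangular change of columns, the nonvanishing of the limit, and the chart argument in $Gr(1,n)$), but the key idea is identical.
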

\begin{proof}
Taking linear combinations of the columns in the matrix used to compute the
determinant \Ref{dwrh}, we obtain the
matrix of the discrete derivatives which tend to the usual
derivatives as $h\to 0$.
In particular, let $p_1(x),\dots,p_N(x)\in\C[x]$
and $\bs\la\in \C^N$.
Then the function
$\Wr^d_h((p_1(x)e^{\la_1x},\dots,p_N(x)e^{\la_Nx})$ is
a smooth function of $h$ and
\be
\Wr^d_h(e^{\la_1x}p_1(x),\dots,e^{\la_Nx}p_N(x))=
h^{N(N-1)/2}\Wr(p_1(x)e^{\la_1x},\dots,p_N(x)e^{\la_Nx})+o(h^{N(N-1)/2})
\ee
as $h\to 0$, see \Ref{wr}, \Ref{dwrh}. The lemma follows.
\end{proof}

{}From Proposition \ref{finite}, we obtain that
it is sufficient to prove Theorem \ref{theorem 2}
for the case of distinct zeroes of the Wronskian $Wr(V)$.

Let $w(x)=\prod_{i=1}^n(x-z_i)$ and $z_i\neq z_j$, $i\neq j$. Let $V$
be a space of quasi-exponentials with exponents $\bs\la=(\la_1,\dots,\la_N)$
such that $\Wr(V)=w(x)e^{\sum_{i=1}^N\la_ix}$.
Then by Lemma \ref{smooth}, there exists a
family $V_h$ of spaces of quasi-exponentials with exponents
$\bs\la$ such that
$\Wr_h^d(V_h)=\Wr(V)$ and
$V_h\to V$ as $h\to 0$.
Then by Corollary \ref{cor theorem 1},
for real $h$ such that $|h|\leq \min_{1\leq i<j\leq n}|z_i-z_j|$,
the spaces $V_h$ are real.
It follows that the space $V$ is real.

Theorem \ref{theorem 2} is proved.

\section{Unramified spaces of
quasi-polynomials and the differential Wronski map}\label{trig sec}

\subsection{Formulation of the statement}
A function of the form $p(x,\log x)x^z$, where $z$ is a complex number,
and $p(x,y)\in\C[x,y]$, is called {\it a quasi-polynomial
function with exponent $z$}.

The quasi-polynomials are multi-valued functions and the exponents are
defined modulo integers. This does not present any difficulty, since
in this paper we use only algebraic properties of the quasi-polynomial
functions.

Fix a natural number $n\geq 2$. Let $\bs z=(z_1,\dots,z_{n})$ be a
sequence of complex numbers. We call a complex vector space of
dimension $n$ spanned by quasi-polynomial functions $p_i(x,\log
x)x^{z_i}$, $i=1,\dots,n$, a {\it space of quasi-polynomials with
exponents $\bs z$}.

A quasi-polynomial function $p(x,\log x)x^{z}$ is called {\it real}
if $z\in\R$ and $p(x,y)\in\R[x,y]$. The space of quasi-polynomials is
called {\it real\/} if it has a basis consisting of real quasi-polynomial
functions.

\medskip

The space of quasi-polynomials $V$ is called {\it non-degenerate} if it
does not contain monomials of the form $x^z$.

Given a space of quasi-polynomials $V$, let $\mc D_V=x^n\partial ^n+\dots$
be the unique
differential operator of order $n$ with kernel $V$ and top coefficient $x^n$.
The space of quasi-polynomials $V$ is called {\it unramified}
if coefficients of $\mc D_V$ are rational functions of $x$.

Let $V$ be an $n$-dimensional non-degenerate unramified space of
quasi-polynomials with exponents $\bs z$.

The operator $\mc D_V$ is Fuchsian. Let $\chi_V^{(\infty)}(\al)$ and
$\chi_V^{(0)}(\al)$ be the indicial polynomials of $\mc D_V$ at $x=\infty$
and $x=0$ respectively.
The polynomials $\chi_V^{(\infty)}(\al), \chi_V^{(0)}(\al)$ are
polynomials in $\al$ of degree $n$. A number $c\in\C$ is a root of
polynomial $\chi_V^{(\infty)}(\al)$ if and only if there exists a
polynomial $p(x,y)\in\C[x,y]$ such that $\deg_xp(x,y)=s$ and
$x^{c-s}p(x,\log x)\in V$. A number $c\in\C$ is a root of polynomial
$\chi_V^{(0)}(\al)$ if and only if there exists a polynomial
$p(x,y)\in\C[x,y]$ such that $p(0,y)\neq 0$ and $x^c p(x,\log x)\in
V$.

\begin{lem}
There exists a unique monic polynomial $Y_V(x)$ such that
\be
\frac{\chi_V^{(\infty)}(\al)}{\chi_V^{(0)}(\al)}=\frac{Y_V(\al-1)}{Y_V(\al)}.
\ee
\end{lem}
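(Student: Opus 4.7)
The plan splits into uniqueness (short) and existence (by explicit construction from the structure of $V$).

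For uniqueness, suppose $Y_1, Y_2$ are two monic polynomials both satisfying $Y_i(\alpha-1)/Y_i(\alpha) = \chi_V^{(\infty)}(\alpha)/\chi_V^{(0)}(\alpha)$. Then the rational function $g := Y_1/Y_2$ satisfies $g(\alpha - 1) = g(\alpha)$ identically. A non-constant rational function on $\C$ has finite fibers, so periodicity under the integer shift forces $g$ to be constant; equal degrees and the monic normalization then yield $Y_1 = Y_2$.

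For existence, I would construct $Y_V$ explicitly from a basis adapted to the filtration of $V$ by order at $0$. Pick a basis $f_1, \dots, f_n$ of $V$, labeled so that the vanishing orders $b_i$ at $0$ (i.e.\ $f_i = x^{b_i} p_i(x, \log x)$ with $p_i(0, \log x) \not\equiv 0$) exhaust the multiset of roots of $\chi_V^{(0)}$, as given by the characterization recalled just above; let $d_i = \deg_x p_i$. The leading behavior $f_i \sim x^{b_i + d_i}\, q_i(\log x)$ at infinity with $q_i\not\equiv 0$, together with the stated description of the roots of $\chi_V^{(\infty)}$, shows that each $b_i + d_i$ is a root of $\chi_V^{(\infty)}$. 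A dimension-counting argument, leveraging the non-degeneracy and unramified hypotheses on $V$, gives that $\{b_i + d_i\}_{i=1}^n$ is precisely the root multiset of $\chi_V^{(\infty)}$. I would then define
\[
Y_V(\alpha) := \prod_{i=1}^n \prod_{k=0}^{d_i - 1}(\alpha - b_i - k),
\]
a monic polynomial, and verify by the telescoping identity $\prod_{k=0}^{d_i - 1}(\alpha - b_i - k - 1)/(\alpha - b_i - k) = (\alpha - b_i - d_i)/(\alpha - b_i)$ that
\[
\frac{Y_V(\alpha - 1)}{Y_V(\alpha)} \;=\; \prod_{i=1}^n \frac{\alpha - b_i - d_i}{\alpha - b_i} \;=\; \frac{\chi_V^{(\infty)}(\alpha)}{\chi_V^{(0)}(\alpha)}.
\]

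The main obstacle is the counting step: identifying $\{b_i + d_i\}_{i=1}^n$ with the full root multiset of $\chi_V^{(\infty)}$. In degenerate cases where some $b_i$'s coincide, logarithmic terms must enter the basis elements, and one has to track carefully how the polynomial degrees $d_i$ contribute to the multiplicities of the roots at $\infty$. The unramified hypothesis is what constrains the possible logarithmic structure so that this bookkeeping works out, while non-degeneracy excludes the pathological situation where a pure monomial $x^z$ lies in $V$ and distorts the pairing of exponents.
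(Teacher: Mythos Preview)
The paper's own proof is the single sentence ``The lemma is obvious,'' so there is nothing to compare approaches against. Your uniqueness argument is clean and correct. The existence argument, however, contains a genuine error, not merely an unfinished step.

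The assertion that $\{b_i+d_i\}_{i=1}^n$ coincides with the root multiset of $\chi_V^{(\infty)}$ is false for a basis adapted only to the order-of-vanishing filtration at $0$. Take $V=\operatorname{span}\{1+x^2,\;x+x^2\}$, a non-degenerate unramified two-dimensional space of quasi-polynomials with exponents $(0,0)$. The basis $f_1=1+x^2$, $f_2=x+x^2$ has $b_1=0$, $b_2=1$ (indeed the roots of $\chi_V^{(0)}(\alpha)=\alpha(\alpha-1)$) and $d_1=2$, $d_2=1$, so $\{b_i+d_i\}=\{2,2\}$. But $\chi_V^{(\infty)}(\alpha)=(\alpha-1)(\alpha-2)$, with roots $\{1,2\}$. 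Your formula then produces $\alpha(\alpha-1)^2$, while the correct polynomial is $Y_V(\alpha)=\alpha(\alpha-1)$. So the ``dimension-counting argument'' you invoke cannot exist in the form stated.

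The fix is to choose the basis adapted \emph{simultaneously} to the filtrations at $0$ and at $\infty$. Any two complete flags on a finite-dimensional space admit a common adapted basis (this is the content of the Bruhat decomposition), and for such a basis the orders $b_i$ at $0$ and $c_i$ at $\infty$ are each pairwise distinct, forcing $c_i=b_i+d_i$ and making the multiset identification automatic. In the example above, $\{1-x,\;x+x^2\}$ is such a basis. With this adjustment your telescoping construction goes through unchanged; the non-degeneracy and unramified hypotheses are not where the difficulty lies.
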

\begin{proof}
The lemma is obvious.
\end{proof}

The Wronskian of any
basis for $V$ has the form $w(x)x^r$, where $r\in\C$ and
$w(x)\in\C[x]$, $w(0)\neq 0$. The unique representative with a monic
polynomial $w(x)$ is called the {\it Wronskian of $V$} and is denoted
by $\Wr(V)$.

\begin{theorem}\label{theorem 3} Let $V$ be an unramified
space of quasi-polynomials
with real exponents $\bs z\in\R^n$, $Y_V=\prod_{i=1}^m
(x-\tilde z_i)$ and $\Wr(V)=x^r\prod_{i=1}^N(x-Q_i)$, where
$\prod_{i=1}^NQ_i\neq 0$.
Assume that $Q_1,\dots,Q_N$ are real. We have:
\begin{enumerate}
\item If $|\tilde z_i-\tilde z_j|\geq 1$ for all $i\neq j$,
then the space $V$ is real.

\item Let $Q_1,\dots,Q_N$ be either all positive or all negative.
Assume that there exists a subset $I\subset\{1,\dots,n\}$ such that
$|\tilde z_i-\tilde z_j|\geq 1$ for $i\neq j$ provided either $i,j\in I$ or
$i,j\not\in I$.
Then the space $V$ is real.
\end{enumerate}
\end{theorem}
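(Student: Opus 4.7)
The plan is to deduce Theorem~\ref{theorem 3} from Theorem~\ref{theorem 1} by bispectral duality, as developed in \cite{MTV5}, \cite{BC}. Under this duality an $n$-dimensional unramified space $V$ of quasi-polynomials with exponents $\bs z\in\R^n$ and Wronskian $\Wr(V)=x^r\prod_{i=1}^N(x-Q_i)$ corresponds to an $N$-dimensional space $V^\vee$ of quasi-exponentials whose bases are $(Q_1,\dots,Q_N)$ and whose discrete Wronskian equals
\[
\Wr^d(V^\vee)\ =\ \prod_{i=1}^m(x-\tilde z_i)\,\prod_{j=1}^N Q_j^x\,,
\]
with the polynomial $Y_V$ appearing precisely as the polynomial factor. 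The polynomial $Y_V$, defined intrinsically via the indicial data of $\mc D_V$ at $0$ and $\infty$, is designed for exactly this purpose: on the dual side it records the positions of the ``inhomogeneities'' of the rational \XXX/ chain. Moreover, the correspondence commutes with complex conjugation, so $V$ is real if and only if $V^\vee$ is real.

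Given this correspondence, the proof is immediate. Reality of $Q_1,\dots,Q_N$ places $V^\vee$ in the setting of Theorem~\ref{theorem 1} with bases in $(\R^\times)^N$. In case (1), the hypothesis $|\tilde z_i-\tilde z_j|\geq 1$ for $i\neq j$ is exactly the hypothesis of part (1) of Theorem~\ref{theorem 1} applied to $V^\vee$, once one knows that the $\tilde z_i$ are real. In case (2), the sign condition on the $Q_i$'s and the partition hypothesis on the index set of the $\tilde z_i$'s translate verbatim into the hypothesis of part (2) of Theorem~\ref{theorem 1}. In either case, Theorem~\ref{theorem 1} yields that $V^\vee$ is real, and transferring reality back through the duality yields Theorem~\ref{theorem 3}. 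This mirrors the reduction of Theorem~\ref{theorem 2} to Theorem~\ref{theorem 1} in Section~\ref{smooth sec}, with bispectral duality replacing the $h\to 0$ limit of the discrete Wronski map.

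The principal obstacle is not the final formal step but the precise setup of the bispectral correspondence in the first paragraph. One must verify intrinsically that $Y_V$, defined via the quotient $\chi_V^{(\infty)}(\al)/\chi_V^{(0)}(\al)=Y_V(\al-1)/Y_V(\al)$, coincides with the polynomial factor of the discrete Wronskian of the dual quasi-exponential space, and that the unramified hypothesis on $V$ is exactly what makes the dual construction land in the class of quasi-exponential spaces treated by Theorem~\ref{theorem 1}. These identifications are what \cite{MTV5}, \cite{BC} provide at the level of Bethe algebras and joint eigenvectors, but aligning the notation with Sections~\ref{state sec}--\ref{proof sec} of the present paper, and verifying that the reality of $\tilde z_i$ is in force (either as an implicit hypothesis of Theorem~\ref{theorem 3} or via the reality of $\bs z$ together with unramifiedness), will take some bookkeeping. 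Once this dictionary is in place, the rest of the argument is automatic.
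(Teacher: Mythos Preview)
Your approach is essentially the same as the paper's: reduce Theorem~\ref{theorem 3} to Theorem~\ref{theorem 1} via bispectral duality, citing \cite{MTV5}, \cite{BC} for the key identification $\Wr^d(V^\vee)=Y_V$.

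A few points where the paper is more concrete than your sketch. First, the paper begins with a reduction to the non-degenerate case (if $x^z\in V$, pass to $(x\partial-z)V$), which you omit. Second, rather than invoking the duality as a black box, the paper writes it down explicitly at the level of operators: expand $\bar A_0(x)\mc D_V=\sum_{i,j}\bar A_{ij}x^i(x\partial)^j$ and set $\mc D_V^*=\sum_{i,j}\bar A_{ij}x^je^{-i\partial}$; the dual space $V^*$ is the quasi-exponential kernel of $\mc D_V^*$. Third, and most relevant to your worry about transferring reality, the paper does \emph{not} argue that ``the correspondence commutes with conjugation.'' Instead, once Theorem~\ref{theorem 1} gives that $V^*$ is real, the monic difference operator annihilating $V^*$ has real coefficients, hence the $\bar A_{ij}$ are real, hence $\mc D_V$ has real coefficients, hence $V$ is real. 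This is cleaner than proving an ``if and only if'' reality statement for the duality. Finally, note the paper's $V^*$ has dimension $s=\max_i\deg\bar A_i$ with bases drawn from $\{Q_1,\dots,Q_N\}$, not necessarily dimension exactly $N$ with bases $(Q_1,\dots,Q_N)$ as you wrote.
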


Theorem \ref{theorem 3} is proved in Section \ref{proof sec 3}.

\subsection{Proof of Theorem \ref{theorem 3}}\label{proof sec 3}
If $x^z\in V$ for some $z\in\R$ then $\tilde V=(x\partial-z)V$ is an
unramified space of quasi-polynomials of dimension $n-1$,
with the same exponents (except maybe for $z$).
We have $\Wr(V)=\Wr(\tilde V)$ and
$Y_{\tilde V}=Y_V$. Moreover, if
$\tilde V$ is real then $V$ is real. Therefore, without loss of
generality we can assume that $V$ is non-degenerate.

Let $V$ be an unramified non-degenerate space of quasi-polynomials
with real exponents $\bs z\in\R^n$, $Y_V=\prod_{i=1}^m
(x-\tilde z_i)$ and $\Wr(V)=x^r\prod_{i=1}^N(x-Q_i)$, where $r\in\C$,
$Q_i\neq 0$. Let
\be
\mc D_V=(x\partial)^n+A_1(x)(x\partial)^{n-1}+\dots +A_n(x)
\ee
be the unique differential operator of order $n$ with kernel $V$
and the top coefficient $x^n$.
The coefficients $A_i(x)$ are rational functions in $x$.
Let $\bar A_0(x)\in\C[x]$
be a monic polynomial such that
$\bar A_i(x)=A_i(x)\bar A_0(x)$ is a polynomial for
$i=1,\dots,n$, and polynomials $\bar A_0(x),\dots, \bar A_n(x)$
are relatively prime.

Write
\be
A_0(x)\mc D_V=\sum_{i=1}^s\sum_{j=1}^n\bar A_{ij}x^i(x\partial)^j,
\ee
where $\bar A_{ij}\in\C$ and $s=\max_i(\deg \bar A_i(x))$.
It is sufficient to prove that $\bar A_{ij}$ are real numbers.

Define a difference operator with polynomial coefficients
$\mc D_V^*$ by the formula
\be
\mc D_V^*=\sum_{i=1}^s\sum_{j=1}^n\bar A_{ij}x^je^{-i\partial}.
\ee

\begin{prop}\label{cite prop}
The quasi-exponential kernel of the operator $\mc D_V^*$ has the
form $V^*\otimes \mc U$, where $V^*$ is a space of
quasi-exponentials with bases $(\bar Q_1,\dots,\bar Q_s)$ and $\bar
Q_i\in\{Q_1,\dots,Q_N\}$ for $i=1,\dots,s$. Moreover,
$\Wr^d(V^*)=Y_V$.
\end{prop}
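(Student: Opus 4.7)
The plan is to realize the passage $\mc D_V \mapsto \mc D_V^*$ as an instance of the $(\glg_n,\glg_s)$ bispectral duality of Mukhin--Tarasov--Varchenko \cite{MTV5}, which under the formal substitution $x^i(x\partial)^j \leftrightarrow x^j e^{-i\partial}$ exchanges apparent singularities with exponents and kernel dimensions $n$ with $s$.

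First, I would identify the allowed bases. Substituting a candidate quasi-exponential $f(x)=p(x)Q^x$ into $\mc D_V^* f=0$ and using $e^{-i\partial}f(x)=Q^{-i}p(x-i)Q^x$, one obtains the polynomial identity
\be
\sum_{i,j}\bar A_{ij}\,Q^{-i}\,x^j\,p(x-i)\;=\;0.
\ee
Reading off the coefficient of the top power of $x$ on the left-hand side yields a polynomial equation in $Q$ whose roots, after clearing powers, are exactly the roots of $\bar A_0$. Since $V$ is Fuchsian and unramified, $\mc D_V$ has apparent singularities precisely at the zeros of its Wronskian, so $\bar A_0$ vanishes at $Q_1,\dots,Q_N$. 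Hence every base $\bar Q$ of a quasi-exponential solution lies in $\{Q_1,\dots,Q_N\}$, and the $1$-periodic ambiguity factors out as $\mc U$, giving a decomposition $V^*\otimes \mc U$ of the full quasi-exponential kernel.

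Second, for the dimension count, I would invoke the general principle that a difference operator of shift-order $s$ with polynomial coefficients has an $s$-dimensional quasi-exponential kernel modulo $\mc U$, and combine this with the first step to conclude $\dim V^*=s$ with bases drawn from $\{Q_1,\dots,Q_N\}$. For the Wronskian identity $\Wr^d(V^*)=Y_V$, I would compare local data at $0$ and $\infty$: the indicial polynomials $\chi_V^{(0)}(\al)$ and $\chi_V^{(\infty)}(\al)$ determine the degrees of the polynomial factors in a standard basis of $V^*$, and via the defining relation $\chi_V^{(\infty)}(\al)/\chi_V^{(0)}(\al)=Y_V(\al-1)/Y_V(\al)$ these degrees assemble into the monic polynomial $Y_V$, as required.

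The main obstacle will be the correct multiplicity bookkeeping in the dimension count: when some $Q_j$ is a non-generic apparent singularity of $\mc D_V$, the corresponding base may appear in $V^*$ with multiplicity greater than one, and the matching of local exponents of $\mc D_V$ at $Q_j$ with the local data of $\mc D_V^*$ around that base needs care. This bookkeeping is the essence of the bispectral $(\glg_n,\glg_s)$ duality in \cite{MTV5}, whose explicit form provides the dictionary required to complete the argument.
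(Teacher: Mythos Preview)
Your approach is the same as the paper's: both defer to the bispectral duality of \cite{MTV5}, and the paper's own proof is in fact nothing more than a pointer to the integral-transform argument of Theorem~4.1 there, with \cite{BC} cited as an alternative. One caution on the explicit part of your sketch: the top-degree coefficient you extract from $\sum_{i,j}\bar A_{ij}Q^{-i}x^j p(x-i)$ is $\sum_i \bar A_{in}Q^{-i}=\bar A_0(Q^{-1})$, so that step as written pins the bases to the \emph{reciprocals} of the roots of $\bar A_0$ rather than to the roots themselves --- this does not affect the overall strategy, which in any case rests on \cite{MTV5}.
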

\begin{proof}
Proposition \ref{cite prop} is proved using a suitable integral
transform in the same way as Theorem 4.1 in \cite{MTV5}. An
alternative proof can be found in \cite{BC}.
\end{proof}

By Theorem \ref{theorem 1}, the space $V^*$ is real and therefore
all $\bar A_{ij}$ are real numbers.

\section{Reformulations}\label{app}

\subsection{The discrete case}\label{dm sec}
In this section we give a reformulation of
Theorem \ref{theorem 1}.

Fix $\bs Q=(Q_1,\dots,Q_N)\in(
\C^\times)^N$, such that $Q_i\neq Q_j$ if
$i\neq j$.

Let $S$ be the $N\times N$
Vandermonde matrix with the $(i,j)$ entry
\be
s_{ij}=Q_i^{j-1}.
\ee
We have $\det S=\prod_{i<j}(Q_j-Q_i)$.

Let $\bar S$ be the $N\times N$
matrix with the $(i,j)$ entry
\be
\bar s_{ij}=(j-1)Q_i^{j-1}.
\ee

Let $A=\diag(a_1,\dots,a_N)$ be the diagonal matrix
with diagonal entries $a_1,\dots,a_N$.

Clearly, the discrete Wronskian of $N$
quasi-exponentials with linear polynomial
part is given by
\bean\label{simple wr}
\Wr^d_1((x-a_1)Q_1^{x},\dots, (x-a_N)Q_N^x)= (\prod_{i=1}^N{Q_i^x})
\det\left((x-A)S+\bar S\right).
\eean

Denote $\bar S S^{-1}=M$. Let $m_{ij}$ denote the $(i,j)$ entry of $M$.

\begin{lem} We have
\bea
m_{ij}&=&Q_i\frac{\prod_{s\neq i,j}(Q_i-Q_s)}{\prod_{s\neq j}(Q_j-Q_s)}
\qquad
(i\neq j),\\
m_{ii}&=& Q_i\sum_{s\neq i}\frac{1}{Q_i-Q_s}.
\eea
\end{lem}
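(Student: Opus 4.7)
The plan is to identify the matrix $M = \bar S S^{-1}$ using Lagrange interpolation at the nodes $Q_1,\dots,Q_N$. The key observation is that the $i$-th rows of $S$ and $\bar S$ are the evaluations at $Q = Q_i$ of the vectors $(1, Q, Q^2, \dots, Q^{N-1})$ and $(0, Q, 2Q^2, \dots, (N-1)Q^{N-1})$ respectively; in other words, the $i$-th row of $\bar S$ is obtained from the $i$-th row of $S$ by applying the differential operator $Q\,\partial_Q$ componentwise and then setting $Q = Q_i$.

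Next, I would introduce the Lagrange basis polynomials
\be
\ell_k(Q) \;=\; \prod_{s \neq k}\frac{Q - Q_s}{Q_k - Q_s},
\ee
so that every polynomial $p(Q)$ of degree less than $N$ satisfies $p(Q) = \sum_k p(Q_k)\,\ell_k(Q)$. Applying this identity to $p(Q) = Q^{j-1}$, then acting by $Q\,\partial_Q$, and finally evaluating at $Q = Q_i$ yields
\be
(j-1)\,Q_i^{j-1} \;=\; \sum_{k=1}^N Q_k^{j-1}\,\bigl(Q_i\,\ell_k'(Q_i)\bigr).
\ee
Since this holds for every $j=1,\dots,N$ and $S$ is invertible (Vandermonde with distinct $Q_k$), the relation $\bar S = MS$ forces $m_{ik} = Q_i\,\ell_k'(Q_i)$.

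The last step is the explicit evaluation of $\ell_k'(Q_i)$. For $k \neq i$, the product defining $\ell_k$ contains the factor $(Q - Q_i)$, so when computing the derivative and setting $Q = Q_i$, only the single term obtained by differentiating that factor survives, giving
\be
\ell_k'(Q_i) \;=\; \frac{\prod_{s\neq i,k}(Q_i - Q_s)}{\prod_{s\neq k}(Q_k - Q_s)},
\ee
which matches the claimed formula for $m_{ij}$ with $i \neq j$. For $k = i$, one has $\ell_i(Q_i) = 1$ and logarithmic differentiation gives $\ell_i'(Q_i)/\ell_i(Q_i) = \sum_{s\neq i}(Q_i-Q_s)^{-1}$, yielding the claimed diagonal formula.

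I do not expect a substantive obstacle: every step is a direct computation, and the only bit of care needed is the standard bookkeeping in the residue-type calculation of $\ell_k'(Q_i)$. An alternative route would be to expand $\det((x-A)S + \bar S)$ from \Ref{simple wr} via $\det(S)\det((x-A) + M)$ and extract coefficients, but the Lagrange interpolation approach above is the cleanest.
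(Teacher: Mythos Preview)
Your proof is correct and follows essentially the same approach as the paper. Both arguments identify $m_{ij}=Q_i\,\ell_j'(Q_i)$ via Lagrange interpolation (the paper writes the polynomials $s^*_j(u)=\ell_j(u)$ directly as the rows of $S^{-1}$ and then observes $m_{ij}=Q_i\,(s^*_j)'(Q_i)$), and then evaluate the derivative of the Lagrange basis polynomial exactly as you do.
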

\begin{proof}
Let $s^*_{ij}$ denote the $i,j$ entry of $W^{-1}$.
Define the polynomials $s^*_i(u)=\sum_{s=1}^Ns^*_{js}u^{s-1}$. We have
\be
s^*_j(Q_i)=\delta_{ij}.
\ee
Therefore
\be
s^*_j(u)=\frac{\prod_{s\neq j} (u-Q_s)}{\prod_{s\neq j} (Q_j-Q_s)}.
\ee
Further, we have
\be
m_{ij}=Q_i(\frac{d}{du}\ s^*_j)|_{u=Q_i}.
\ee
The lemma follows.
\end{proof}

Define the matrix $\mc Z^d$ by
\bean\label{zd}
\mc Z^d =
\left( \begin{array} {cccccc}
a_1 & \dfrac{Q_1}{Q_2-Q_1} & \dfrac{Q_1}{Q_3-Q_1} &\dots & \dfrac{Q_1}{Q_N-Q_1}
\\
\dfrac{Q_2}{Q_1-Q_2} & a_2 & \dfrac{Q_2}{Q_3-Q_2} &{} \dots &
\dfrac{Q_2}{Q_N-Q_2}
\\[9pt]
{}\dots & {}\dots & {} \dots & \dots & \dots
\\[6pt]
\dfrac{Q_N}{Q_1-Q_N} & \dfrac{Q_N}{Q_2-Q_N}& \dfrac{Q_N}{Q_3-Q_N} &{} \dots
& a_N
\end{array} \right).
\eean

Let $D=\diag(d_1,\dots,d_N)$ be the diagonal matrix with diagonal entries
\be
d_i=\prod_{s\ne i} (Q_i-Q_s).
\ee
Let $B=\diag(m_{11},\dots,m_{NN})$ be the diagonal matrix with diagonal
entries $m_{ii}$.

Then we have the equality of matrices
\bean\label{conjugate}
A+B-D^{-1}\bar S S^{-1} D=\mc Z^d.
\eean

\begin{theorem}\label{theorem 1a}
Let
$Q_1,\dots,Q_N$ be distinct real numbers and $a_1,\dots,a_N$ complex numbers.
Let $z_1,\dots,z_N$ be eigenvalues of the matrix $\mc Z^d$.
Assume that $z_1,\dots,z_N$ are real.
Then we have:
\begin{enumerate}
\item If $|z_i-z_j|\geq 1$ for all $i\neq j$,
then the numbers $a_1,\dots,a_N$ are real.

\item Let $Q_i$ be either all positive or all negative.
Assume that there exists a subset $I\subset\{1,\dots,N\}$ such that
$|z_i-z_j|\geq 1$ for $i\neq j$ provided either $i,j\in I$ or $i,j\not\in I$.
Then the numbers $a_1,\dots,a_N$ are real.
\end{enumerate}
\end{theorem}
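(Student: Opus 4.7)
The plan is to deduce Theorem~\ref{theorem 1a} from Theorem~\ref{theorem 1} by constructing an $N$-dimensional space $V$ of quasi-exponentials with real bases $\bs Q$ whose discrete Wronskian has the prescribed eigenvalues $z_1,\dots,z_N$ of $\mc Z^d$ as its roots. Theorem~\ref{theorem 1} will then force $V$ to be real, and realness of $a_1,\dots,a_N$ will follow from the canonical decomposition of $V$ by exponential base. The only delicate point is choosing the right $V$: since the diagonal entries of $\mc Z^d$ equal $a_i$ rather than $a_i-m_{ii}$, the naive choice $\on{span}((x-a_i)Q_i^x)$ would produce the eigenvalues of $A-M$ instead of those of $\mc Z^d$, so a shift is required.

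The identity \Ref{conjugate} dictates the correct shift. I set $a_i':=a_i+m_{ii}$; since $m_{ii}=Q_i\sum_{s\neq i}(Q_i-Q_s)^{-1}$ is a real number, $a_i$ is real if and only if $a_i'$ is. Define $V:=\on{span}\bigl((x-a_i')Q_i^x:i=1,\dots,N\bigr)$ and compute its discrete Wronskian via \Ref{simple wr} with $A$ replaced by $A':=\diag(a_1',\dots,a_N')$. Factoring $S$ out on the right yields $\det((xI-A')S+\bar S)=\det(S)\cdot\det(xI-(A'-M))$, so after normalization the Wronskian polynomial is the characteristic polynomial of $A'-M=A+B-M$. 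By \Ref{conjugate}, $D\,\mc Z^d\,D^{-1}=A+B-M$, so this characteristic polynomial is precisely $\prod_{i=1}^N(x-z_i)$, and $\Wr^d(V)=\prod_{i=1}^N(x-z_i)\prod_{j=1}^NQ_j^x$. The spacing conditions placed on $z_1,\dots,z_N$ in cases (1) and (2) of Theorem~\ref{theorem 1a} match verbatim those of Theorem~\ref{theorem 1} (with $n=N$), so Theorem~\ref{theorem 1} forces $V$ to be real.

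Finally, since $Q_1,\dots,Q_N$ are pairwise distinct real numbers, $V$ admits the canonical decomposition $V=\bigoplus_{i=1}^N V_i$, where $V_i$ is the one-dimensional subspace of quasi-exponentials in $V$ with base $Q_i$ (intrinsically, the $Q_i$-generalized eigenspace of the shift operator on $V$). Each $V_i$ is preserved under complex conjugation of the polynomial coefficients because $Q_i$ is real and $V=\ol V$, and being one-dimensional it therefore has a real basis $(\alpha_ix+\beta_i)Q_i^x$ with $\alpha_i,\beta_i\in\R$. The generator $(x-a_i')Q_i^x$ of $V_i$ is not a multiple of $Q_i^x$, so $\alpha_i\ne 0$; comparing generators gives $a_i'=-\beta_i/\alpha_i\in\R$, whence $a_i=a_i'-m_{ii}\in\R$.
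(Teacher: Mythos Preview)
Your proof is correct and follows essentially the same route as the paper's: shift the diagonal entries by the real numbers $m_{ii}$, use \Ref{simple wr} and \Ref{conjugate} to identify the roots of the discrete Wronskian of the resulting quasi-exponentials with the eigenvalues of $\mc Z^d$, and then invoke Theorem~\ref{theorem 1}. The paper's proof is a two-line sketch that leaves the final deduction (realness of $V$ implies realness of each $a_i'$) implicit, whereas you spell it out via the canonical decomposition by base; this is a welcome elaboration, not a different approach.
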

\begin{proof}
By formulas \Ref{simple wr}, \Ref{conjugate} the eigenvalues of the
matrix $\mc Z^d$ are zeroes of the discrete Wronskian $\Wr^d((x-\bar
a_1)Q_1^{x},\dots, (x-\bar a_N)Q_N^x)$, where $\bar a_i=a_i+m_{ii}$.
Since $m_{ii}$ are real, Theorem \ref{theorem 1a} follows from
Theorem \ref{theorem 1}.
\end{proof}

\begin{rem}
We are not aware of a direct proof of Theorem \ref{theorem 1a}.
\end{rem}

\begin{cor}\label{cor 1}
Let $Q$ and $Z$ be complex $N\times N$-matrices such that $Q$ is
non-singular and
\be
Z - Q^{-1}ZQ = 1 - K,
\ee
where $K$ is a rank one matrix.
Assume that all eigenvalues of $Q$ and $Z$ are real.
Let $z_1,\dots,z_N$ be the eigenvalues of $Z$.
Then we have:
\begin{enumerate}
\item
If $|z_i-z_j|\geq 1$ for $i\neq j$,
then there exists an invertible matrix $C$ such that $C^{-1}QC$ and
$C^{-1}ZC$ are real matrices.

\item
Let eigenvalues of $Q$ be either all positive or all negative.
Assume that there exists a subset $I\subset\{1,\dots,N\}$ such that
$|z_i-z_j|\geq 1$ for $i\neq j$ provided either $i,j\in I$ or $i,j\not\in I$.
Then there exists an invertible matrix $C$ such that $C^{-1}QC$ and
$C^{-1}ZC$ are real matrices.
\end{enumerate}
\end{cor}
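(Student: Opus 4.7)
The plan is to reduce Corollary~\ref{cor 1} directly to Theorem~\ref{theorem 1a} by bringing the pair $(Q,Z)$ into a canonical form in which $Z$ coincides with the matrix $\mc Z^d$ of \Ref{zd}.

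First, I would diagonalize $Q$. Since by hypothesis the eigenvalues of $Q$ are real, in the main case where $Q$ has simple spectrum there is an invertible $C_0$ for which $C_0^{-1}QC_0=\diag(Q_1,\dots,Q_N)$ is a real diagonal matrix. In this basis the equation $Z-Q^{-1}ZQ=1-K$ becomes, entry by entry, a set of conditions: the diagonal equations force every diagonal entry of $K$ to equal $1$, and the off-diagonal equations give
\be
z_{ij}=\frac{Q_i\,k_{ij}}{Q_j-Q_i}\qquad (i\neq j).
\ee
The rank-one hypothesis on $K$ with unit diagonal means $K=\alpha\beta^T$ with $\alpha_i\beta_i=1$, so a further diagonal conjugation by $D=\diag(\alpha_1,\dots,\alpha_N)$, which commutes with the diagonal $Q$, normalizes $K$ to the all-ones matrix and leaves $Q$ invariant. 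In this final normal form, $Z$ is exactly the matrix $\mc Z^d$ with $a_i:=z_{ii}$ playing the role of the free diagonal entries in \Ref{zd}.

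With this identification Theorem~\ref{theorem 1a} applies verbatim: the hypotheses of its two cases --- reality of the $Q_i$, reality of the spectrum of $Z$, the separation inequality on the $z_k$, and the common-sign condition on the $Q_i$ in case (2) --- are exactly the hypotheses of the corresponding cases of the corollary. Its conclusion that $a_1,\dots,a_N$ are real then says that $Z$ has become a real matrix, so setting $C=C_0 D$ makes both $C^{-1}QC$ and $C^{-1}ZC$ real as required.

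The main obstacle I expect is handling the degenerate pairs $(Q,Z)$ in which $Q$ has coinciding eigenvalues or is not diagonalizable. A case analysis of the equation $[Q,Z]=Q(1-K)$ in a Jordan basis should either rule out these configurations as incompatible with $\on{rank}K=1$ --- a repeated eigenvalue $Q_i=Q_j$ forces both $k_{ii}=k_{jj}=1$ and $k_{ij}=k_{ji}=0$, contradicting rank one --- or treat them as limits of the generic case by a short continuity argument. Once this reduction is established, the corollary is simply a matrix-theoretic repackaging of Theorem~\ref{theorem 1a}, with no further analytic input required.
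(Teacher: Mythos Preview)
Your reduction in the generic case --- diagonalize $Q$, then normalize $K$ to the all-ones matrix by a further diagonal conjugation, and identify $Z$ with $\mc Z^d$ --- is exactly what the paper does, and Theorem~\ref{theorem 1a} then finishes the argument.

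Where you and the paper diverge is in the non-generic case. Your proposed dichotomy is only half right: if $Q$ is \emph{diagonalizable} with a repeated eigenvalue, your $2\times 2$ minor argument does exclude it. But $Q$ can carry a genuine Jordan block and still satisfy the rank-one constraint (already for $N=2$ one checks that such pairs exist), so these configurations cannot be ruled out and must be reached by limits. The continuity step is not as short as you suggest: one needs that the set of pairs simultaneously conjugate to real matrices is \emph{closed} in an appropriate moduli space, and that every pair can be approximated by generic ones while the spectral hypotheses on $Q$ and $Z$ are preserved. The paper gets both ingredients from the Calogero-Moser space $\mc M_N$: it is smooth and the spectrum map $\tau:\mc M_N\to\C^N/S_N\times\C^N/S_N$ is finite of degree $N!$ (as in \cite{EtG}, \cite{Et}), and the real locus $\R\mc M_N$ is closed by Proposition~3.1 of \cite{HY}. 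With these structural facts in hand, the generic computation you carried out plus continuity completes the proof; without them the limit argument has a gap.
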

\begin{proof}
Let $\tilde {\mc M}_N$ be the set of pairs of complex $N\times N$
matrices $(Z,Q)$ such that $Q$ is non-singular and such that the rank
of the matrix $Z-Q^{-1}ZQ-1$ is one.
We call $(Z_1,Q_1),(Z_2,Q_2)\in \tilde {\mc M}_N$
equivalent if there exists an invertible $N\times N$
matrix $C$ such that $Z_2=C^{-1}Z_1C$ and $Q_2=C^{-1}Q_1C$.
Let ${\mc M_N}$ be the set of equivalence classes.

Define a map:
\be
\tau: \ \mc M_N \to \C^N/S_N\times \C^N/S_N,
\ee
which sends the class of $(Z,Q)$ to $(\on{Spec}\,Z\ ,\on{Spec}\, Q)$.

Then similarly to \cite{EtG},
one can show that $\mc M_N$ is a smooth variety and the
map $\tau$ is a finite map of degree $N!$, \cite{Et}.

Let $\R\mc M_N\subset \mc M_N$ be the subset of classes of pairs
$(Z,Q)\in\mc M_N$ with real matrices $Z,Q$.
By Proposition 3.1 of \cite{HY}, the subset $\R\mc M_N$ is closed.

If $Q$ is a semi-simple matrix with eigenvalues $Q_i$, then there
exists a matrix $C$ such that $C^{-1}QC$ is diagonal and then it is
easy to see that $Z$ is given by \Ref{zd}.

Therefore, the corollary follows from Theorem~\ref{theorem 1a} by continuity.
\end{proof}

\subsection{The smooth case}
In this section we give a reformulation of
Theorem \ref{theorem 2}.

Fix $\bs \la=(\la_1,\dots,\la_N)$, such that $\la_i\neq \la_j$ if $i\neq j$.
Define the matrix $\mc Z$ by
\bean\label{z}
\mc Z =
\left( \begin{array} {cccccc}
a_1 & \dfrac{1}{\la_2-\la_1} & \dfrac{1}{\la_3-\la_1} &\dots &
\dfrac{1}{\la_N-\la_1}
\\
\dfrac{1}{\la_1-\la_2} & a_2 & \dfrac{1}{\la_3-\la_2} &{} \dots &
\dfrac{1}{\la_N-\la_2}
\\[9pt]
{}\dots & {}\dots & {} \dots & \dots & \dots
\\[6pt]
\dfrac{1}{\la_1-\la_N} & \dfrac{1}{\la_2-\la_N}& \dfrac{1}{\la_3-\la_N}
&{} \dots & a_N
\end{array} \right).
\eean

\begin{theorem}\label{theorem 2a}
Let $\la_1,\dots,\la_N$ be distinct real numbers. If all
eigenvalues of the matrix $\mc Z$ are real, then the numbers
$a_1,\dots,a_N$ are real.
\end{theorem}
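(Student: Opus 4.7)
The plan is to deduce Theorem \ref{theorem 2a} from Theorem \ref{theorem 2}, in close analogy with the derivation of Theorem \ref{theorem 1a} from Theorem \ref{theorem 1} carried out in Section \ref{dm sec}. The idea is to realize the eigenvalues of $\mathcal{Z}$ as the zeros of the Wronskian of an explicit space of quasi-exponentials with linear polynomial parts.

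First I would introduce the Vandermonde-type matrices $S_{ij}=\lambda_i^{j-1}$ and $\bar S_{ij}=(j-1)\lambda_i^{j-2}$ and observe that
\be
\Wr\bigl((x-a_1)e^{\lambda_1x},\dots,(x-a_N)e^{\lambda_Nx}\bigr)
=\Bigl(\prod_{i=1}^N e^{\lambda_ix}\Bigr)\det\bigl((x-A)S+\bar S\bigr),
\ee
where $A=\diag(a_1,\dots,a_N)$. Setting $M=\bar S S^{-1}$, the zeros of this Wronskian coincide with the eigenvalues of $A-M$. A short Lagrange interpolation computation, exactly parallel to the lemma preceding \Ref{conjugate}, gives the entries
\be
M_{ii}=\sum_{s\neq i}\frac1{\lambda_i-\lambda_s},\qquad
M_{ij}=-\frac{d_i}{d_j(\lambda_j-\lambda_i)}\quad(i\neq j),
\ee
with $d_i=\prod_{s\neq i}(\lambda_i-\lambda_s)$. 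Conjugation by $D=\diag(d_1,\dots,d_N)$ kills the $d$-factors in the off-diagonal entries, yielding the matrix identity
\be
D^{-1}(\bar A-M)D\ =\ \mathcal Z,\qquad \bar a_i:=a_i+M_{ii}.
\ee
Since $\lambda_1,\dots,\lambda_N$ are real, the numbers $M_{ii}$ are real, so $\bar a_i\in\R$ iff $a_i\in\R$.

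Next I would apply the hypothesis: the eigenvalues of $\mathcal Z$ equal the eigenvalues of $\bar A-M$, which are exactly the zeros of $\Wr(V)$ for $V=\on{span}\{(x-\bar a_i)e^{\lambda_i x}\}_{i=1}^N$. These zeros are real by assumption, and the exponents $\lambda_i$ are real, so Theorem \ref{theorem 2} applies and $V$ is real.

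Finally I would extract reality of the $\bar a_i$ from reality of $V$. Because the $\lambda_i$ are distinct, $V$ decomposes as the direct sum of the one-dimensional subspaces $V^{\lambda_i}=\C(x-\bar a_i)e^{\lambda_i x}$ (an element $\sum c_i(x-\bar a_i)e^{\lambda_i x}$ can equal $p(x)e^{\lambda_j x}$ only if a single $c_i$ is nonzero). Any real quasi-exponential in $V$ must therefore lie in some $V^{\lambda_j}$ and have the form $p(x)e^{\lambda_j x}$ with $p\in\R[x]$; but $V^{\lambda_j}$ is spanned by $(x-\bar a_j)e^{\lambda_j x}$, forcing $\bar a_j\in\R$, hence $a_j\in\R$. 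The only conceptual step is the correct diagonal conjugation relating $\mathcal Z$ and $\bar A-M$; everything else is a direct transcription of the discrete argument of Section \ref{dm sec}, so I do not expect a serious obstacle.
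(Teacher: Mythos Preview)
Your proposal is correct and follows essentially the same approach as the paper: the paper's proof simply asserts that ``by a computation similar to the one in Section~\ref{dm sec}'' the eigenvalues of $\mathcal{Z}$ are the zeros of $\Wr((x-\tilde a_1)e^{\lambda_1x},\dots,(x-\tilde a_N)e^{\lambda_Nx})$ with $\tilde a_i=a_i+\sum_{s\neq i}(\lambda_i-\lambda_s)^{-1}$, and then invokes Theorem~\ref{theorem 2}. You carry out that computation explicitly (your $\bar a_i$ is the paper's $\tilde a_i$), and you also spell out the final step---that reality of $V$ forces each $\bar a_i\in\R$ via the direct-sum decomposition by exponent---which the paper leaves implicit.
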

\begin{proof}
By a computation similar to the one in Section \ref{dm sec},
we obtain that the
eigenvalues of the matrix $\mc Z$ are zeroes of the Wronskian
$\Wr((x-\tilde
a_1)e^{\la_1x},\dots, (x-\tilde a_N)e^{\la_Nx})$, where
\be
\tilde a_i=a_i+\sum_{s\neq i}\frac{1}{\la_i-\la_s}.
\ee
Therefore Theorem \ref{theorem 2a} follows from Theorem \ref{theorem 2}.
\end{proof}

The matrix $\mc Z$ in relation to the Wronskian of quasi-exponentials
appeared in \cite{W}.

\begin{cor}\label{cor 2} \cite{HY}
Let $Q$ and $Z$ be complex $N\times N$-matrices such that
\be
[Q,Z] = 1 - K,
\ee
where $K$ is a rank one matrix.
If all eigenvalues of $Q$ and $Z$ are real,
then there exists an invertible matrix $C$ such that $C^{-1}QC$ and
$C^{-1}ZC$ are real matrices.
\end{cor}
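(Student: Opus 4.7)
The plan is to mimic the proof of Corollary \ref{cor 1}, replacing the ``difference'' Calogero--Moser variety by its ``differential'' counterpart, and to reduce the statement to Theorem \ref{theorem 2a} exactly as Corollary \ref{cor 1} was reduced to Theorem \ref{theorem 1a}.

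First I would set up the ambient space. Let $\wt{\mc M}_N^{\on{CM}}$ be the set of pairs $(Q,Z)$ of complex $N\times N$ matrices with $[Q,Z]-1$ of rank one, and let $\mc M_N^{\on{CM}}$ be its quotient by the simultaneous conjugation action of $GL_N(\C)$. By the work of Wilson (and as used in \cite{EtG}, \cite{Et}), $\mc M_N^{\on{CM}}$ is a smooth affine variety of dimension $2N$, and the spectrum map
\be
\tau^{\on{CM}}\colon\ \mc M_N^{\on{CM}}\,\to\,\C^N/S_N\times\C^N/S_N,\qquad
(Q,Z)\,\mapsto\,(\on{Spec}\,Q,\on{Spec}\,Z),
\ee
is a finite map of degree $N!$. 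Let $\R\mc M_N^{\on{CM}}\subset\mc M_N^{\on{CM}}$ denote the subset of classes representable by a pair of real matrices; by Proposition~3.1 of \cite{HY}, $\R\mc M_N^{\on{CM}}$ is closed in $\mc M_N^{\on{CM}}$.

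Next I would analyze the generic stratum where $Q$ is semisimple with pairwise distinct eigenvalues $\la_1,\dots,\la_N$. Conjugating, we may assume $Q=\diag(\la_1,\dots,\la_N)$. Writing $[Q,Z]_{ij}=(\la_i-\la_j)z_{ij}$ and $1-K=(\delta_{ij}-K_{ij})$, the diagonal equations force $K_{ii}=1$, so if $K=uv^{\!\top}$ has rank one then $u_iv_i=1$. A further conjugation by the diagonal matrix $\diag(u_1,\dots,u_N)$ preserves $Q$ and turns $K$ into the all-ones matrix. The off-diagonal equations then give $z_{ij}=1/(\la_j-\la_i)$ for $i\neq j$, while the diagonal entries $a_i:=z_{ii}$ remain free. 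Hence in this normal form $Z$ is precisely the matrix $\mc Z$ of~\Ref{z}. Now if the eigenvalues of $Q$ and of $Z$ are all real, then $\la_1,\dots,\la_N$ are real and distinct and $\on{Spec}\,\mc Z\subset\R$, so Theorem~\ref{theorem 2a} yields $a_1,\dots,a_N\in\R$. Thus in the chosen basis both $Q$ and $Z$ are real, which means the class of $(Q,Z)$ lies in $\R\mc M_N^{\on{CM}}$.

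Finally I would extend from the semisimple stratum to all of $\mc M_N^{\on{CM}}$ by a closedness/density argument, exactly as in the proof of Corollary~\ref{cor 1}. The locus $U\subset\mc M_N^{\on{CM}}$ where $Q$ is semisimple with distinct eigenvalues is open and dense, and its intersection with $(\tau^{\on{CM}})^{-1}(\R^N/S_N\times\R^N/S_N)$ is dense in the full preimage (using smoothness of $\mc M_N^{\on{CM}}$ and finiteness of $\tau^{\on{CM}}$). Since the previous step shows this dense subset is contained in $\R\mc M_N^{\on{CM}}$, and $\R\mc M_N^{\on{CM}}$ is closed, the whole real-spectral preimage is contained in $\R\mc M_N^{\on{CM}}$, which is the claim. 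The main obstacle is the last step: one has to be careful that any pair $(Q,Z)$ with real spectra and possibly degenerate $Q$ really is a limit of generic semisimple pairs with real spectra (and not just of pairs in $\mc M_N^{\on{CM}}$ whose spectra move off the real axis); this is where the smoothness of $\mc M_N^{\on{CM}}$, the finiteness and local structure of $\tau^{\on{CM}}$, and the closedness result of \cite{HY} have to combine properly, as they do for Corollary~\ref{cor 1}.
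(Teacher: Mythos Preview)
Your proposal is correct and follows exactly the route the paper indicates: the paper's own proof of Corollary~\ref{cor 2} consists of the single sentence ``The proof is similar to the proof of Corollary~\ref{cor 1},'' and you have carried out precisely that adaptation, reducing to Theorem~\ref{theorem 2a} on the semisimple locus and invoking the same smoothness/finiteness/closedness ingredients (\cite{EtG}, \cite{Et}, Proposition~3.1 of \cite{HY}) for the limiting argument. Your caveat about the density step is fair, but the paper treats it no more carefully in the proof of Corollary~\ref{cor 1}.
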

\begin{proof}
The proof is similar to the proof of Corollary \ref{cor 1}.
\end{proof}

We are not aware of a direct proof of Theorem \ref{theorem 2a}.

Using the duality studied in \cite{MTV4}, one can show that the case
of quasi-exponentials with linear polynomials is generic, so
Theorem~\ref{theorem 2} can be deduced from Theorem~\ref{theorem 2a}.

Moreover, the B.~and M.\,Shapiro conjecture can be obtained from
Theorem~\ref{theorem 2a} for the case of a nilpotent matrix $\mc Z$.

For a different proof of Corollary \ref{cor 2},
a relation to Calogero-Moser spaces and representations of the
Cherednik algebras see \cite{GHY}.

\subsection{Trigonometric case}
In this section we give a dual version of Theorem \ref{theorem 1a}.

Fix complex numbers $z_1,\dots,z_N$ such that $z_i-z_j\neq 1$.

Define the matrix $\mc Q^d$ by
\be
\mc Q^d =
\left( \begin{array} {cccccc}
b_1 & \dfrac{b_2}{z_1-z_2+1} & \dfrac{b_3}{z_1-z_3+1}
&\dots & \dfrac{b_N}{z_1-z_N+1}
\\
\dfrac{b_1}{z_2-z_1+1} & b_2 & \dfrac{b_3}{z_2-z_3+1} &{} \dots &
\dfrac{b_N}{z_2-z_N+1}
\\
{}\dots & {}\dots & {} \dots & \dots & \dots
\\
\dfrac{b_1}{z_N-z_1+1} & \dfrac{b_2}{z_N-z_2+1}& \dfrac{b_3}{z_N-z_3+1}
&{} \dots & b_N
\end{array} \right).
\ee

\begin{theorem}\label{theorem 3a} Let
$z_1,\dots,z_N$ be real numbers such that $z_i-z_j\neq 1$
and let $b_1,\dots,b_N$ be complex numbers.
Let $Q_1,\dots,Q_N$ be eigenvalues of the matrix $\mc Q^d$. Assume
that $Q_1,\dots, Q_N$ are nonzero distinct real numbers. Then we have:
\begin{enumerate}
\item If $|z_i-z_j|>1$ for all $i\neq j$,
then the numbers $b_1,\dots,b_N$ are real.

\item Let $Q_i$ be either all positive or all negative.
Assume that there exists a subset $I\subset\{1,\dots,N\}$ such that
$|z_i-z_j|\geq 1$ for $i\neq j$ provided either $i,j\in I$ or $i,j\not\in I$.
Then the numbers $b_1,\dots,b_N$ are real.
\end{enumerate}
\end{theorem}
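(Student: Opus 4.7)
The plan is to deduce Theorem~\ref{theorem 3a} from Theorem~\ref{theorem 3}, paralleling the deductions of Theorems~\ref{theorem 1a} and~\ref{theorem 2a} from Theorems~\ref{theorem 1} and~\ref{theorem 2}. The goal is to construct an explicit $N$-dimensional unramified space $V$ of quasi-polynomials, parametrized by $(\bs z, \bs b)$, with the property that $Y_V$ has roots $z_1, \dots, z_N$ (so that the $z_i$ of Theorem~\ref{theorem 3a} are exactly the $\tilde z_i$ of Theorem~\ref{theorem 3}), and that the zeros of $\Wr(V)$ coincide, up to a real shift, with the eigenvalues $Q_1, \dots, Q_N$ of $\mc Q^d$. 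Once this dictionary is set up, the spacing hypothesis on the $z_i$ in Theorem~\ref{theorem 3a} matches exactly the spacing hypothesis on the roots of $Y_V$ in Theorem~\ref{theorem 3}, and reality of $V$ forces reality of the $b_i$.

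A natural first candidate is $V = \on{span}\{x^{z_i}(x - b_i) : i = 1,\dots,N\}$. Under the genericity hypotheses $z_i \neq z_j$ and $z_i - z_j \neq \pm 1$, a direct computation of the indicial polynomials of the annihilator $\mc D_V$ at $0$ and $\infty$ yields $\chi_V^{(0)}(\al) = \prod_i (\al - z_i)$ and $\chi_V^{(\infty)}(\al) = \prod_i (\al - z_i - 1)$, so $V$ is unramified and $Y_V = \prod_i (\al - z_i)$, as required. Expanding $\Wr(V)$ by multilinearity reduces its polynomial part to $\det(xT_+ - BT_0)$, where $B = \diag(b_1,\dots,b_N)$ and $T_0, T_+$ are the falling-factorial matrices $(T_0)_{ik} = (z_i)_{k-1}$, $(T_+)_{ik} = (z_i + 1)_{k-1}$, playing the role of the Vandermonde matrices of Section~\ref{dm sec}. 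The identity $(z+1)_{k-1} = (z)_{k-1} + (k-1)(z)_{k-2}$ gives $T_+ = T_0(I+R)$ for a real nilpotent shift $R$, so $\det T_+ = \det T_0 \neq 0$ and the polynomial part of $\Wr(V)$ equals $\det T_0 \cdot \det(xI - M)$ for the explicit matrix $M = T_0 T_+^{-1} B$.

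The key step, and the main technical obstacle, is to identify $M$ with $\mc Q^d + E$ up to similarity, for an explicit real diagonal matrix $E$: i.e., to show that the falling-factorial manipulations produce exactly the denominators $z_i - z_j + 1$ appearing in $\mc Q^d$, rather than the ``naive'' denominators $z_i - z_j$ suggested by low-dimensional computation. This is a nontrivial finite combinatorial step, and it is plausibly where either a further diagonal rescaling (involving ratios of $\Gamma$-values at the $z_i$) or a shift to $V = \on{span}\{x^{z_i-1}(x - b_i)\}$ must be invoked, so that the identity $(z+1)_{k-1}/(z)_{k-1} = (z+1)/(z-k+2)$ inserts the missing $+1$; this identification is essentially the matrix form of the bispectral duality of~\cite{MTV5}, \cite{BC} between the differential and the step-$1$ discrete settings. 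Once it is in place, reality of the spectrum of $\mc Q^d$ yields reality of the zeros of $\Wr(V)$, Theorem~\ref{theorem 3} applies to $V$, and the $b_i$ are forced to be real.
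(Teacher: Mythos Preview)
Your approach --- building an explicit space $V$ of quasi-polynomials whose Wronskian zeros are the eigenvalues of $\mc Q^d$ and then invoking Theorem~\ref{theorem 3} --- is exactly the paper's \emph{alternative} argument. The paper's \emph{primary} proof, however, is much shorter and sidesteps the obstacle you flag entirely: with $Z=\diag(z_1,\dots,z_N)$, a direct computation shows that every entry in column $j$ of $\mc Q^d Z - Z\,\mc Q^d - \mc Q^d$ equals $-b_j$, so this matrix has rank one; since the $Q_i$ are nonzero, $\mc Q^d$ is invertible, and Corollary~\ref{cor 1} applies to the pair $(Z,\mc Q^d)$, giving a simultaneous real conjugate from which reality of the diagonal entries $b_i$ follows.

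On your route the gap you identify is genuine, and its resolution is not quite what you guess. By analogy with Theorems~\ref{theorem 1a} and~\ref{theorem 2a} you expect an \emph{additive} real diagonal correction $\mc Q^d + E$, but in the trigonometric case the correction is \emph{multiplicative}: the eigenvalues of $\mc Q^d$ are the Wronskian zeros not of $\on{span}\{x^{z_i}(x-b_i)\}$ but of $\on{span}\{x^{z_i}(x-\tilde b_i)\}$, where
\[
\tilde b_i \,=\, b_i\prod_{s\ne i}\frac{z_i-z_s}{\,z_i-z_s-1\,}
\]
(one first checks the $z_i$ are forced to be distinct: if two coincide, the corresponding rows of $\mc Q^d$ agree and $\det\mc Q^d = 0$, contradicting $Q_i\ne 0$). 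Since these rescaling factors are real, Theorem~\ref{theorem 3} applied to this $V$ gives reality of the $\tilde b_i$ and hence of the $b_i$. Both routes are valid; the one via Corollary~\ref{cor 1} trades the falling-factorial bookkeeping for a two-line rank check.
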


\begin{proof}
Let $Z=\diag(z_1,\dots,z_N)$ be the diagonal matrix with diagonal
entries $z_i$. Then $QZ-ZQ-Q$ is a rank $1$ matrix. Theorem
\ref{theorem 3a} follows from Corollary \ref{cor 1}.

Alternatively,
one can show that $z_1,\dots,z_N$ are all distinct, that
the eigenvalues of the matrix $\mc Q^d\prod_{i<j}$
are zeroes of the Wronskian
$\Wr(x^{z_1}(x-\tilde b_1),\dots,x^{z_N}(x-\tilde b_N))$,
where
\be
\tilde b_i=b_i\prod_{s\neq i}\frac{z_i-z_s}{z_i-z_s-1},
\ee
and deduce Theorem \ref{theorem 3a} from Theorem \ref{theorem 3}.
\end{proof}


\begin{thebibliography}{[EGSV]}
\normalsize
\frenchspacing

\bibitem[BC]{BC}
Yu.\,Berest, O.\,Chalykh,
{\it Calogero-Moser correspondence: trigonometric case}, Preprint, 1--15

\bibitem[CT]{CT} A.\,Chervov, D.\,Talalaev
{\it Quantum spectral curves,
quantum integrable systems and the geometric Langlands correspondence},
Preprint hep-th/0604128 (2006), 1--54

\bibitem [EG]{EG} A.\,Eremenko and A.\,Gabrielov,
{\it Rational functions with real critical points and the B.~and M.\,Shapiro
conjecture in real enumerative geometry}, Ann.~of~Math.~(2)
{\bf 155} (2002), no.~1, 105--129

\bibitem [EGSV]{EGSV} A.\,Eremenko, A.\,Gabrielov,
M.\,Shapiro, and Vainshtein,
{\it Rational functions and real Schubert calculus},
Proc. Amer. Math. Soc. {\bf 134} (2006), no. 4, 949--957

\bibitem[Et]{Et} P.\,Etingof, private communications

\bibitem[EtG]{EtG} P.\,Etingof, V.\,Ginzburg, {\it Symplectic reflection
algebras, Calogero-Moser space, and deformed Harish-Chandra
homomorphism}, Invent. Math. 147 (2002), 243-348

\bibitem[GHY]{GHY}
I.\,Gordon, E.\,Horozov, M.\,Yakimov, {\it
The real loci of Calogero-Moser spaces, representations
of rational Cherednik algebras and the Shapiro conjecture},
Preprint arXiv:0711.4336 (2007), 1--19

\bibitem[HY]{HY}
E.\,Horozov, M.\,Yakimov, {\it
The real loci of Calogero-Moser spaces and the Shapiro-Shapiro conjecture},
Preprint arXiv:0710.5291 (2007), 1--17

\bibitem[KS]{KS}
P.P.\,Kulish and E.K.\,Sklyanin, {\it
Quantum spectral transform method. Recent developments}.
Lect. Notes in Phys. {\bf 151} (1982), 61--119

\bibitem[MTV1]{MTV1} E.\,Mukhin, V.\,Tarasov, A.\,Varchenko, {\it The
B.~and M.\,Shapiro conjecture in real algebraic geometry and the
Bethe ansatz}, Preprint math.AG/0512299 (2005), 1--18, to appear
in Annals of Mathematics

\bibitem [MTV2]{MTV2}
E.\,Mukhin, V.\,Tarasov, A.\,Varchenko,
{\it Bethe Eigenvectors of Higher Transfer Matrices},
J.~Stat. Mech. (2006), no.\;8, P08002, 1--44

\bibitem [MTV3]{MTV3} E.\,Mukhin, V.\,Tarasov, A.\,Varchenko, {\it
Generating Operator of \XXX/ or Gaudin Transfer Matrices Has
Quasi-Exponential Kernel}, SIGMA Symmetry Integrability Geom.
Methods Appl. 3 (2007), Paper 060 (electronic), 1--31

\bibitem[MTV4]{MTV4} E.\,Mukhin, V.\,Tarasov, A.\,Varchenko
{\it Bispectral and $(gl_N, gl_M)$ Dualities},
Func. Anal. and Other Math. {\bf 1} (2006) no.1, 55--80

\bibitem[MTV5]{MTV5} E.\,Mukhin, V.\,Tarasov, A.\,Varchenko,
{\it Bispectral and $(gl_N, gl_M)$ Dualities, Discrete Versus Differential},
Preprint math.QA/0605172 (2006), 1--48

\bibitem[RSSS]{RSSS}
J.\,Ruffo, Y.\,Sivan, E.\,Soprunova, and F.\,Sottile,
{\it Experimentation and conjectures in the real Schubert calculus
for flag manifolds}, Preprint math.AG/0507377 (2005), 1--34

\bibitem [S] {S}
F.\,Sottile,
{\it Enumerative real algebraic geometry}, Algorithmic and
quantitative real algebraic geometry (Piscataway, NJ, 2001),
139--179, DIMACS Ser. Discrete Math. Theoret. Comput. Sci.,
vol.~60, Amer. Math. Soc., Providence, RI, 2003

\bibitem[W]{W} G.\,Wilson, {\it Collisions of Calogero-Moser particles
and an adelic Grassmannian}, Invent. Math. {\bf 133} (1998), 1--41

\end{thebibliography}
\end{document}